\title{\textbf{Modelling and analysis of a modified May--Holling--Tanner predator-prey model with Allee effect in the prey and an alternative food source for the predator}}
\author{
Claudio Arancibia--Ibarra \\
School of Mathematical Sciences, Queensland University of Technology\\
GPO Box 2434, GP Campus, Brisbane, Queensland 4001 Australia\\
Facultad de Ingeniería y Negocios, Universidad de Las Américas \\
Av. Manuel Montt 948, Santiago, Chile \\
 \texttt{claudio.arancibiaibarra@qut.edu.au} \\
  \AND
Jos\'e Flores \\
Department of Mathematics, The University of South Dakota (USD)\\
Vermillion, South Dakota, USA\\
 \texttt{Jose.Flores@usd.edu} \\  
}
\begin{document}
\maketitle
\begin{abstract} \normalsize

In the present study, we have modified the traditional May--Holling--Tanner predator-prey model used to represent the interaction between least weasel and field-vole population by adding an Allee effect (strong and weak) on the field-vole population and alternative food source for the weasel population. It is shown that the dynamic is different from the original May--Holling--Tanner predator-prey interaction since new equilibrium points have appeared in the first quadrant. Moreover, the modified model allows the extinction of both species when the Allee effect (strong and weak) on the prey is included, while the inclusion of the alternative food source for the predator shows that the system can support the coexistence of the populations, extinction of the prey and coexistence and oscillation of the populations at the same time. Furthermore, we use numerical simulations to illustrate the impact that changing the predation rate and the predator intrinsic growth rate have on the basin of attraction of the stable equilibrium point or stable limit cycle in the first quadrant. These simulations show the stabilisation of predator and prey populations and/or the oscillation of these two species over time.

\end{abstract}

\keywords{ Modified May--Holling--Tanner \and Holling--Tanner type II \and Alternative food\and Allee effect.}

\section{Introduction}

Numerous investigations have extensively studied the interactions between the specialist predator least-weasel (\textit{Mustela nivalis}) and the field-vole (\textit{Microtus agrestis})~\cite{hanski2,hanski,turchin2}. These investigations have revealed important findings in the variation of the populations in Fennoscandia from regular cycles to stable populations. This variation is also affected by the fact that the rodent breeding is seasonal, and the duration of the breeding varies with the physical location in Fennoscandia~\cite{dalkvist}. Moreover, cycles may be affected by the existence of an optimum group size and an Allee effect~\cite{madison}. Graham and Lambin~\cite{graham} showed that field-vole survival depends on the variation of weasel predation. They also demonstrated that the proportion of weasels was suppressed in summer and autumn, while the vole population always declined to a low density. Thus, if the predation by weasels is reduced then the field-vole survival increases. In~\cite{graham}, the authors assumed that the May--Holling--Tanner model was not appropriate to conduct such studies due to the large number of parameters. Additionally, the authors suggested that there are different variations in these two species that should be considered in order to improve study the results.   

The most common mathematical framework for describing this predator-prey interaction is that the weasel and field-vole have a logistic-type growth function and the predator carrying capacity is prey dependent~\cite{turchin}. The classic model used to represent the predation between these two species is the May--Holling--Tanner predator-prey model~\cite{may} which is a modification of original the Leslie--Gower model. Leslie (1948) proposed the following generalisation of the Lotka-Volterra model
\begin{equation}\label{LG}
\begin{aligned}
\dfrac{dN}{dt}&=rN\left(1-\dfrac{N}{K}\right)-qNP, \\
\dfrac{dP}{dt} &= sP\left(1-\dfrac{P}{nN}\right).
\end{aligned}
\end{equation}
where $N(t)$ is used to represent the size of the prey population at time $t$, $P(t)$ is used to represent the size of the predator population at time $t$, $r$ is the intrinsic growth rate for the prey, $s$ is the intrinsic growth rate for the predator, $K$ is the prey environmental carrying capacity, $n$ is a measure of the quality of the prey as food for the predator, $nN$ can be interpreted as a prey dependent carrying capacity for the predator and the term $P/nN$ is known as the Leslie-Gower term. It measures the loss in the predator population due to rarity (per capita $P/N$) of its favourite food. 

This model retains the prey equation from the Volterra model, but for the predator it assummes a logistic-like term, in which the predator carrying capacity is directly proportional to prey density. This interesting formulation for the predator dynamics has been discussed in Leslie and Gower in~\cite{leslie} and Pielou in~\cite{pielou}. Later, May (1974) in~\cite{may} modified the Leslie-Gower model~\eqref{LG} by assuming the hyperbolic functional response (Holling type II) in the prey equation, $H(N)=qN/(N+a)$, which occurs in species when the number of prey consumed rises rapidly at the same time that the prey density increases~\cite{turchin,may,santos}. The parameter $q$ is the maximum predation rate per capita and $a$ is half of the saturated level~\cite{turchin}. In particiular, it is given by
\begin{equation}\label{eq1}
\begin{aligned}
\dfrac{dN}{dt}&=rN\left(1-\dfrac{N}{K}\right)-q\dfrac{NP}{N+a}. \\
\dfrac{dP}{dt} &= sP\left(1-\dfrac{P}{nN}\right).
\end{aligned}
\end{equation}
Model~\eqref{eq1} is known as Holling--Tanner or as May--Holling--Tanner model which is described by an autonomous two-dimensional system of ordinary differential equations~\cite{turchin,aguirre1,arancibia7}.

The original May--Holling--Tanner model~\eqref{eq1} was studied, among others, by~\cite{saez}. The authors proved that all the solutions which are initiated in $\mathbb{R}^2_+$ are bounded and the equilibrium point $(1,0)$, which is the scaled equilibrium point $(K,0)$, is a saddle point. Moreover, by doing a polar blowing-up~\cite{dumortier} at the origin, Saez and Gonz\'alez-Olivares~\cite{saez} showed that there exists a separatrix curve between a hyperbolic and a parabolic sector in the neighbourhood of the origin. The authors also found that there is always one positive equilibrium point in the first quadrant which can be stable; unstable surrounded by a stable limit cycle; or stable surrounded by two limit cycles. Additionally, the global stability of a periodic solution of this predator-prey model was investigated by~\cite{lisena} and the analysis of a discrete May--Holling--Tanner model was described in~\cite{cao}. 
\begin{figure}
\begin{center}
\includegraphics[width=11.5cm]{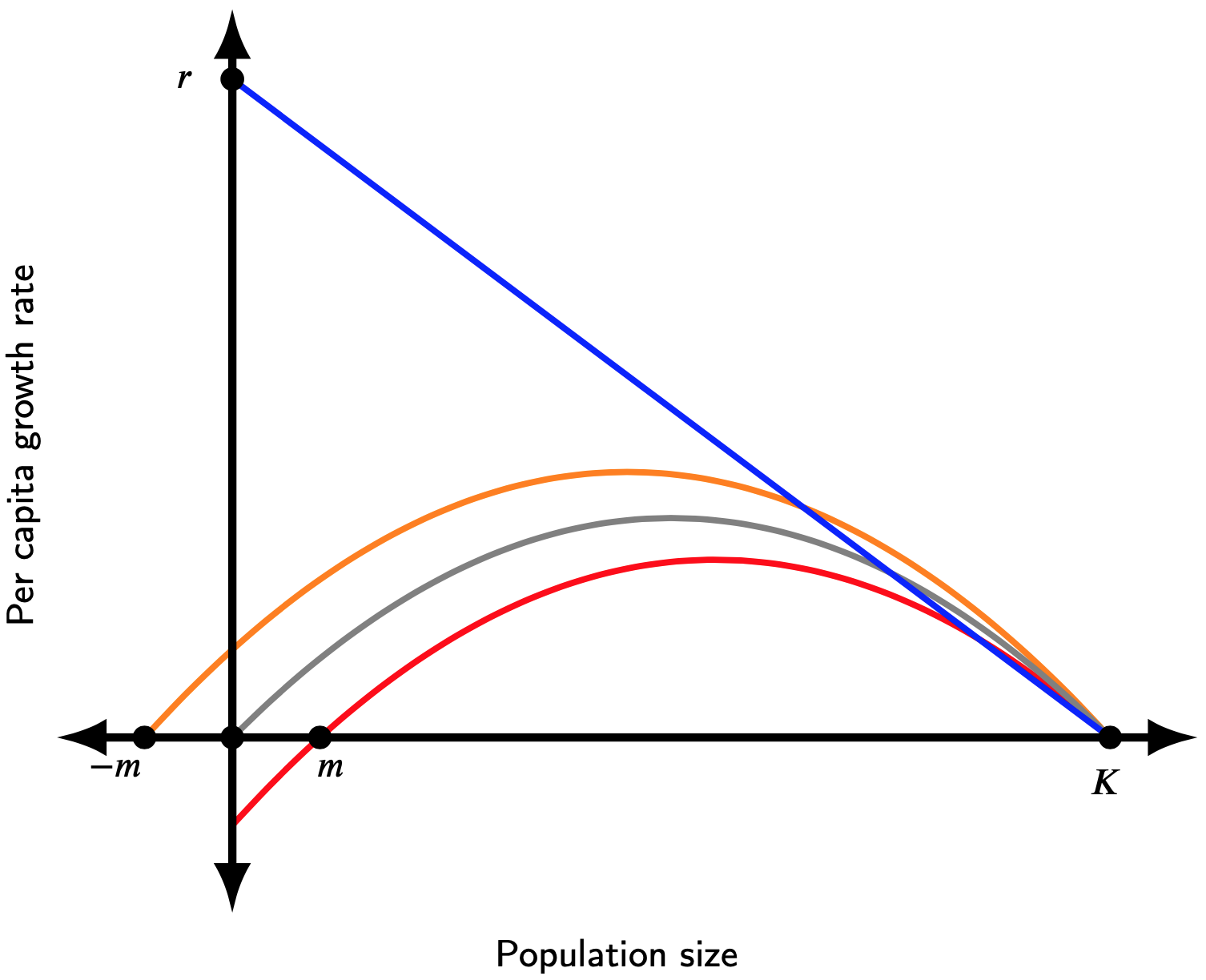}
\end{center}
\caption{We show the per capita growth rate of the logistic function (blue line), the strong Allee effect with $m=15$ (red curve), and the weak Allee effect with $m=-15$ (orange curve) and $m=0$ (grey curve).}
\label{FIG01}
\end{figure} 

Additional complexity can be incorporated into the predator-prey model~\eqref{eq1} in order to make it more realistic. We can consider, in the prey population, a mechanism connected with a density-dependent phenomenon (Allee effect~\cite{allee2,allee,berec,buffoni,van,wang7}). The Allee effect may appear due to a wide range of biological phenomena such as reduced anti-predator vigilance; social thermo-regulation; genetic drift; mating difficulty; reduced defence against the predator; and deficient feeding because of low population densities~\cite{stephens2}. The Allee effect may appear in the field-vole population due to the density-dependent habitat selection showed in~\cite{jankovic,morris}. Moreover, the stabilisation of field-vole populations depends on the variation in reproductive rate and recruitment of the population~\cite{ostfeld}. The influence of the Allee effect upon the logistic-type growth in the prey equation is represented by the inclusion of a multiplier in the form of $N-m$~\eqref{eq4}. For $0<m<K$, the per-capita growth rate of the prey population with the Allee effect included is negative, but increasing, for $N\in[0,m)$, and this is referred to as the “strong Allee effect”. When $m\leq0$\footnote{Note that $m=0$ is often also called the “weak Allee effect”, however, the behaviour of the model with $m=0$ is similar to the case of a strong Allee effect ($m>0$) and thus we will not consider the case of $m=0$ in this manuscript.}, the per-capita growth rate is positive but increases at low prey population densities and this is referred to as the weak Allee effect~\cite{berec,courchamp2}. The per capita growth rate for the logistic growth function and Allee effects (strong and weak) are shown in Figure~\ref{FIG01}. We observe that the per capita growth rate for a weak Allee effect is positive and growing, when compared to the strong Allee effect. Moreover, the trajectories of a population affected by a weak Allee effect are delayed compared with a strong Allee effect. This effect can be generated by the reduction of the male population~\cite{liermann} or the habitat selection~\cite{jankovic,morris}.

\begin{table}
\begin{center}
\caption{In the first row the prey growth rate is affected by Allee effect and in the second row the predator growth rate is affected by an alternative food source.}
\begin{tabular}{ccc} \hline
Modified growth equations 	&  Modified models		\\\hline
\\
Allee effect: &\\ 
\begin{minipage}{7cm}\begin{equation}\label{eq4} F_{m}(N)=r\left(1-\dfrac{N}{K}\right)(N-m)\end{equation}\end{minipage} & 
\begin{minipage}{8cm}\begin{equation}\label{eq5}\begin{aligned}
\dfrac{dN}{dt} &= rN\left(1-\dfrac{N}{K}\right)\left(N-m\right)-\dfrac{qNP}{N+a}, \\
\dfrac{dP}{dt} &= sP\left(1-\dfrac{P}{nN}\right).
\end{aligned}\end{equation}\end{minipage}	\\
\\
Alternative food: &\\  
\begin{minipage}{7cm}\begin{equation}\label{eq6}G_{m}(N,P)=s\left(1-\dfrac{P}{nN+c}\right)\end{equation}\end{minipage}& 
\begin{minipage}{8cm}\begin{equation}\label{eq7}\begin{aligned}
\dfrac{dN}{dt} &= rN\left(1-\dfrac{N}{K}\right)-\dfrac{qNP}{N+a}, ~~~~~~~~~~\\
\dfrac{dP}{dt} &= sP\left(1-\dfrac{P}{nN+c}\right).
\end{aligned}\end{equation}\end{minipage} \\
\\
Allee effect and alternative food: &\\ 
\begin{minipage}{7cm}equations~\eqref{eq4} and~\eqref{eq6}\end{minipage}& 
\begin{minipage}{8cm}\begin{equation}\label{eq8}\begin{aligned}
\dfrac{dN}{dt} &= rN\left(1-\dfrac{N}{K}\right)\left(N-m\right)-\dfrac{qNP}{N+a}, \\
\dfrac{dP}{dt} &= sP\left(1-\dfrac{P}{nN+c}\right).
\end{aligned}\end{equation}\end{minipage} \\
\\\hline
\end{tabular}
\end{center}
\end{table}

Some species are considered as a generalist and specialist predator and thus it can switch to another available food, although its population growth may still be limited by the fact that its preferred food is not available in abundance. For instance, McDonald {\em et al.\ }~\cite{mcdonald} reveal that weasels' diet involved mainly field-vole, which correspond to $68\%$  of the diet which can be considered as the primary food source. Moreover, the authors also reveal that lagomorphs with $25\%$ of the diet; birds and birds' eggs with $5\%$ of the diet and other types of food with $2\%$ of the diet can be considered as a secondary food source, see Figure~\ref{FIG02}. This characteristic was not considered in investigations associated with the interaction between the field-vole as a prey and least-weasel as a predator~\cite{hanski2,hanski,turchin,wollkind,hanski3,andersson,erlinge,hansson}. That is, these studies do not consider that, since the predator is a generalist, it can survive under different environments and utilise a large range of food resources. Instead of adding more species to the model, we assume that these other food sources are abundantly available~\cite{mcdonald} and model this characteristic by adding a positive constant $c$ to the environmental carrying capacity for the predator~\cite{aziz}. Therefore, we have made a modification to the prey-dependent logistic growth term in the predator equation, namely $nN$ is replaced by $nN+c$~\eqref{eq6}. As a result, we observe that the modified May--Holling--Tanner model with an alternative food source~\eqref{eq7} is no longer singular for $N=0$.    

\begin{figure}
\begin{center}
\includegraphics[width=14cm]{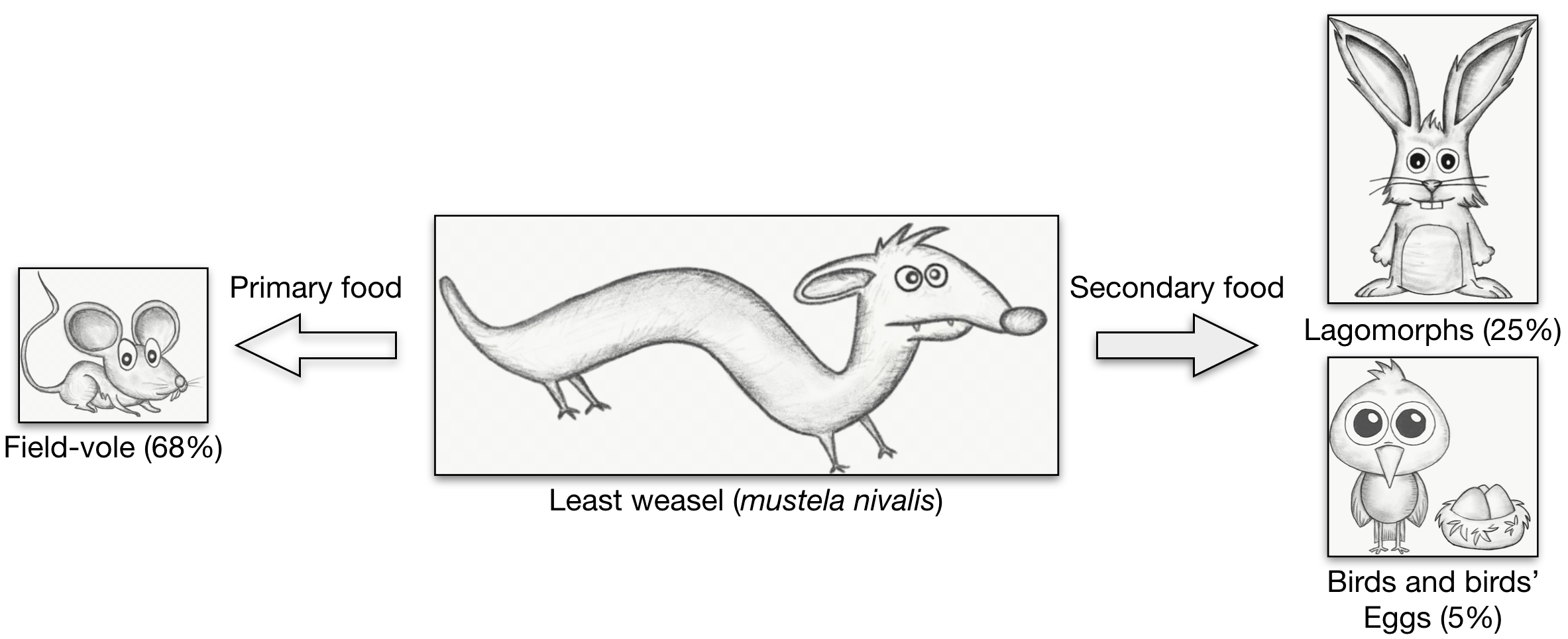}
\end{center}
\caption{Main and secondary diet of a specialist predator least weasel (\textit{mustela nivalis}) studied in~\cite{mcdonald}.}
\label{FIG02}
\end{figure}  

The aim of this manuscript is to extends the properties of the weasel and field-vole interaction studied in~\cite{graham} by showing the impact of an Allee effect on the prey~\eqref{eq5} and an alternative food source for the predator~\eqref{eq7} in the original May--Holling--Tanner model. In addition, the analysis of the modified model with weak Allee effect (i.e. systems~\eqref{eq5} with $m<0$) and the modified model considering an alternative food for the predator (i.e. system~\eqref{eq7}) show a strong connection with the findings presented in~\cite{turchin2,hanski3}. Study results suggest that the system does not lose the coexistence and oscillation of the population when a constant alternative food source for weasels and a weak Allee effect on the prey are included. Moreover, the survival of the field-vole does not depend on the level of predation when a weak Allee on the prey is included~\cite{graham}. However, we have shown that the May--Holling--Tanner model with strong Allee effect on the prey (i.e. systems~\eqref{eq5} with $m>0$) reduce the complexity of the dynamics in the modified model. The modified model with strong Allee effect supports, depending on the initial conditions, extinction and coexistence of both the prey and the predator species. 

The mathematical interaction in systems~\eqref{eq1}, considering the modifications separately and together, is study further in Section~\ref{MHT}. In Section~\ref{co}, we conclude the manuscript, presenting the impact of the modification on the dynamic of the modified May--Holling--Tanner models. We also discuss the ecological implications of the inclusion of the strong and weak Allee effects on the prey and of an alternative food for the predator in a real model studied in~\cite{turchin2,graham,hanski3}.

\section{Models}\label{MHT}

The results derived in~\cite{saez} are for the nondimensionalised versions of the modified versions of systems~\eqref{eq1} which is given by
\begin{equation*}
\begin{aligned}
\dfrac{dN}{dt} & = rN\left(1-\dfrac{N}{K}\right)-\dfrac{qNP}{N+a}, \\ 
\dfrac{dP}{dt} & = sP\left(1-\dfrac{P}{nN}\right).
\end{aligned}  
\end{equation*}

However, this nondimensionalised version is topologically equivalent to the original version of the model as we used a diffeomorphism preserving the orientation of time. Therefore, the results can be transferred to the original May--Holling--Tanner model. Moreover, we can extend the analysis by using the system parameters presented in~\cite{turchin2}. In this paper, Turchin and Hanski used the original May--Holling--Tanner model and the authors showed that the oscillation of the field-vole is driven by their interaction with the weasel. This oscillation is obtained by using the system parameters presented in Table~\ref{T02}.

\begin{table}
\begin{center}
\caption{Description of the system parameters, their units and values that will be used in this manuscript based in Turchin \& Hanski~\cite{turchin2} results. We add an alternative food for the predator and the Allee threshold values.} 
\label{T02}
\begin{tabular}{ccccc} 
\hline
Description of parameter						& Symbols 	& Values		& Units						\\\hline
Intrinsic growth rate of field-vole population	& $r$		& $4$		& yr$^{-1}$ 					\\
Field-vole carrying capacity					& $K$		& $150$		& voles ha$^{-1}$
				\\
Maximum consumption per weasel	 			& $q$		& $500-700$	& voles yr$^{-1}$ weasel$^{-1}$ 	\\
Half of the saturated level					& $a$		& $6$		& voles ha$^{-1}$
			\\
Intrinsic growth rate of weasel population		& $s$		& $1-1.5$		& yr$^{-1}$					\\
Quality of the field-vole as food for the weasel	 & $n$		& $0.025$		& voles $^{-1}$ weasel 			\\
Quality of alternative species as food for the weasel	& $c$		& $0.01$		& weasel 			\\
Allee threshold					& $m$		& $\pm15$		& voles ha$^{-1}$ 			\\
\hline
\end{tabular}
\end{center}
\end{table}

Nowadays, there are several computational methods to find different type of bifurcations. These methods are implemented in software packages such as MATCONT~\cite{matcont2}. Figure~\ref{FIG05} illustrates the Hopf bifurcation which was detected with MATCONT for the parameters $r$, $K$, $a$, $n$ and $c$ fixed, while we will vary $q$ and $s$, see Table~\ref{T02}.

\subsection{Original model}
The original May--Holling--Tanner predator-prey model with functional response Holling type II~\eqref{eq1} was studied mathematically, for instance, in~\cite{saez,huang} and used to model real predator-prey interactions~\cite{hanski2,hanski,roux,turchin}. The bifurcation diagram of the original May--Holling--Tanner model~\eqref{eq1} created with MATCONT~\cite{matcont2} and two typical phase portraits are shown in the first row of Figure~\ref{FIG05}. The Hopf curve\footnote{The Hopf curve is where the stability of an equilibrium point switches and, in some cases, a periodic solution arises.} for the May--Holling--Tanner model shows that if the predation rate $q$ and the intrinsic growth of the predator $s$ are located in the hatched green area, then system~\eqref{eq1} supports the stabilisation of the predator and the prey populations. If the predation rate is located in the blue area, then there are conditions for which the system supports the oscillation of both populations, i.e., there is a stable limit cycle surrounding a positive equilibrium point in the first quadrant. Moreover, there are conditions in the $(q,s)$ parameter space (near the Hopf curve) for which populations can coexist and also oscillate simultaneously, i.e., there are two limit cycles surrounding a positive equilibrium point in the first quadrant. However, for the sake of the presentation, this region is not included in the bifurcation diagram. Instead, we refer to~\cite{saez}. Note that the extinction of (one of) the species is not possible in this model. 

\subsection{Allee effect (strong and weak)}
The May--Holling--Tanner model with a strong Allee effect ($m>0$) was studied in~\cite{arancibia3}. The authors proved that system~\eqref{eq5} with $m>0$ has three equilibrium points in the $x$-axis and at most two positive equilibrium points, see the left panel of Figure~\ref{FIG03}. The authors also proved that one of the positive equilibrium points is always a saddle. While, the other can be stable, unstable or stable surrounded by an unstable limit cycle. For $m<0$ there are two equilibrium points on the $x$-axis and at most three positive equilibrium points, see the right panel of Figure~\ref{FIG03}. The stability of the equilibrium points of system~\eqref{eq5} considering $m<0$ is studied partially in Appendix~A. 

First, we recall that as we used a diffeomorphism preserving the orientation of time the nondimensionalised version~\eqref{eq12} is topologically equivalent to the original version of the model~\eqref{eq5}. Therefore, the results can be transferred to the modified May--Holling--Tanner model. Moreover, we can extend the analysis by using the system parameters $(r,K,a,n,c,m)=(4,150,6,0.025,0.01,\pm15)$ (see Table ~\ref{T02}). The bifurcation diagram of the May--Holling--Tanner model with weak Allee effect~\eqref{eq5} ($m<0$) and two typical phase portraits are shown in the third row of Figure~\ref{FIG05}. System~\eqref{eq5} with $m<0$ is not discussed in detail in this manuscript, but the bifurcation diagram can be obtained by using the techniques presented in~\cite{arancibia3}. If the parameters $(q,s)$ are located in the hatched green region, then system~\eqref{eq5} has one positive equilibrium point which is stable, and thus both the predator and the prey population can coexist. If the parameters $(q,s)$ are located in the blue region, then the equilibrium point is unstable surrounded by a stable limit cycle and thus both the predator and the prey populations oscillate. 

However, if the parameters $(q,s)$ are located in the grey region, then system~\eqref{eq5} has three equilibrium points. These equilibrium points can be a saddle, a stable and/or an unstable node. Therefore, system~\eqref{eq5} supports the coexistence and/or oscillation of the predator and prey populations. Note that the dynamics of system~\eqref{eq5} with $m<0$, outside the grey region, are qualitatively similar to the dynamics of system~\eqref{eq1} as shown in row one in Figure~\ref{FIG05} since there is a large region in the $(q,s)$ parameter space in which there is always one positive equilibrium point, i.e. the hatched green region. 

Additionally, the Allee effect, both strong and weak, decreases the range of the value of the predation rate for which both populations can coexist, i.e. $\tilde{q}_2<q_1$ (see Figure~\ref{FIG05}). Moreover, as in the original May--Holling--Tanner model, the predator and prey populations never become extinct in system~\eqref{eq5} and thus there are always conditions in the system parameters where both populations coexist (see hatched green area in Figure~\ref{FIG05}) and/or oscillate (see blue and grey areas in Figure~\ref{FIG05}). 

The bifurcation diagram of the modified May--Holling--Tanner model with strong Allee effect~\eqref{eq5} ($m>0$) studied in~\cite{arancibia3} and two typical phase portraits are shown in the fourth row of Figure~\ref{FIG05}. The Hopf curve for the model shows that if $(q,s)$ are located in the green area, system~\eqref{eq5} supports the stabilisation of both populations and also the extinction of both populations. However, if the predation rate is located in the red hatched area in Figure~\ref{FIG05} ($q>\tilde{q}_1$), then system~\eqref{eq5} does not have positive equilibrium points in the first quadrant and thus the system only supports the extinction of both the predator and the prey populations. In contrast, if the predation rate and the intrinsic growth of the predator $(q,s)$ are located in the solid red area, then system~\eqref{eq5} has two positive equilibrium points which are a saddle and an unstable node, and thus again we observe the extinction of the predator and prey populations.  

\subsection{Alternative food}
The model with alternative food for predators corresponds to system~\eqref{eq7}. This model was studied in~\cite{arancibia7, arancibia2, arancibia5} where the authors presented the stability of the equilibrium points and conditions for periodic solutions. The bifurcation diagram of the modified May--Holling--Tanner model~\eqref{eq7} and two typical phase portraits are shown in the second row of Figure~\ref{FIG05}, see also~\cite{arancibia9,arancibia5}. The Hopf curve for the modified May--Holling--Tanner model shows that if the predation rate $q$ and the intrinsic growth of the predator $s$ are located in the green area, then system~\eqref{eq7} supports the stabilisation of both populations and also the extinction of only the prey population. If the predation rate is located in the hatched brown area in Figure~\ref{FIG05} ($q>q_2$), then system~\eqref{eq7} does not have a positive equilibrium point in the first quadrant and thus the extinction of the prey population is observed, while the predator population stabilises at $(0,c)$ (the equilibrium point $(0,c)$ is related to the alternative food source for the predator). 

Moreover, when the parameters $(q,s)$ are located in the solid brown area, then system~\eqref{eq7} has two positive equilibrium points in the first quadrant, namely a saddle and an unstable node, and the extinction of the prey population is again observed for all initial conditions. When the parameters $(q,s)$ are located in the hatched brown area, then system~\eqref{eq7} does not have equilibrium points in the first quadrant and the extinction of the prey population is observed for all initial conditions. Additionally, in~\cite{arancibia9,arancibia5} the authors have shown that there is a region, near to the Hopf bifurcation, in which both populations oscillate and coexist at the same time, i.e. there are two limit cycles surrounding a positive equilibrium point in the first quadrant. For the sake of the presentation, we did not include this region in the bifurcation diagram. 

\subsection{Allee effect (strong and weak) and alternative food}
The May--Holling--Tanner model with Allee effect on the prey and alternative food for predators is given by system~\eqref{eq8}. This model was studied partially in~\cite{arancibia} and in Appendix~B. The bifurcation diagram of the modified May--Holling--Tanner model with Allee effect on the prey and alternative food for the predator~\eqref{eq8} and two typical phase portraits are shown in the last row of Figure~\ref{FIG05}. System~\eqref{eq1} with Allee effect is not discussed in detail in this manuscript, but the bifurcation diagram can also be obtained by using the techniques presented in~\cite{arancibia3}. The Hopf curve for the system~\eqref{eq8} with Allee effect and alternative food shows that system~\eqref{eq8} supports the coexistence of both populations, the extinction of the prey population only (green area) and the extinction of the prey population (solid and hatched brown regions). That is, the behaviour is similar to the modified May--Holling--Tanner model~\eqref{eq7}, see the second row of Figure~\ref{FIG05}.

In Figure~\ref{FIG05}, we show the bifurcation diagrams (first columns) in which different regions represent different behaviours. In the solid green region, sets of initial conditions exist that lead to the coexistence of both species, while there are also open sets of initial conditions that lead to the extinction of both species. In the hatched green regions, all initial conditions lead to the coexistence of both populations. In the solid blue regions, all initial conditions lead to the oscillation of both populations. In the solid and hatched red regions, all initial conditions lead to the extinction of both species, while in the solid and hatched brown regions, all initial conditions lead to the extinction of the prey only. In the right column, the orange region (dark and light) represents the basin of attraction of the stabilisation of both populations, the light blue (light grey) region represents the basin of attraction of the extinction of both (only the prey) populations and the yellow region represent the oscillation of both populations.

\begin{figure}
\centering
\includegraphics[width=14cm]{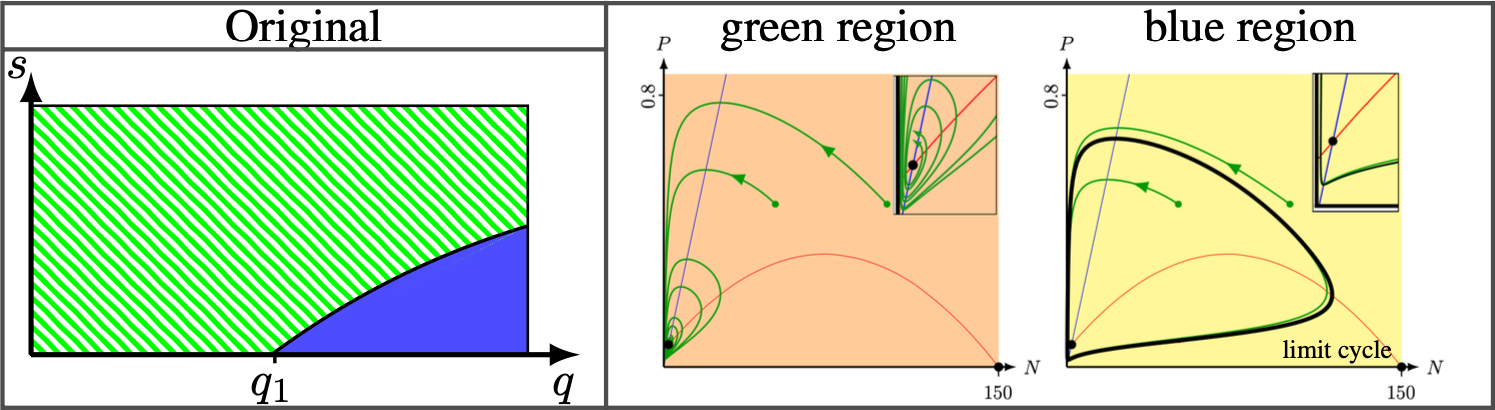}
\includegraphics[width=14cm]{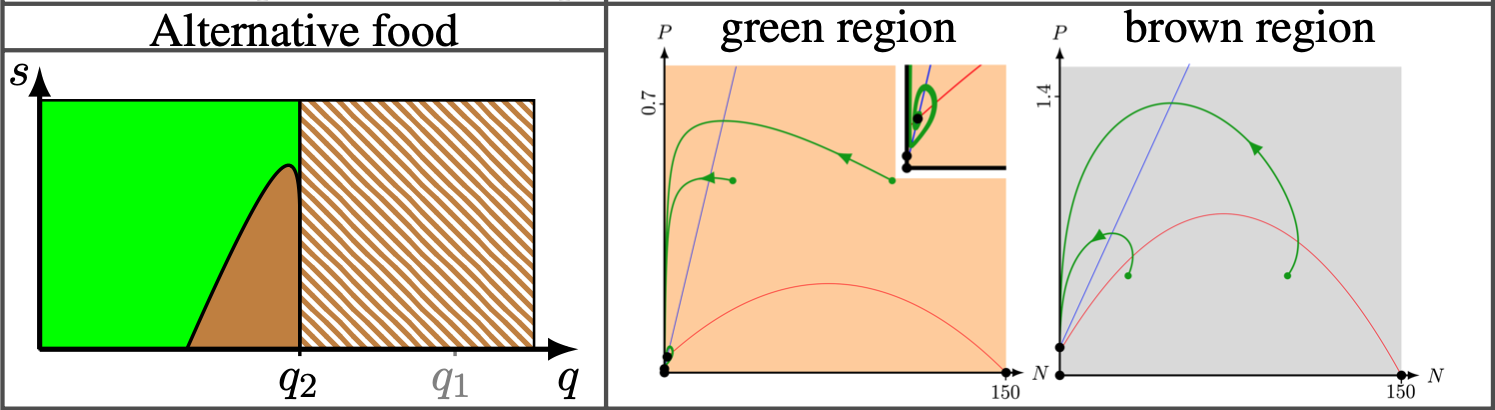}
\includegraphics[width=14cm]{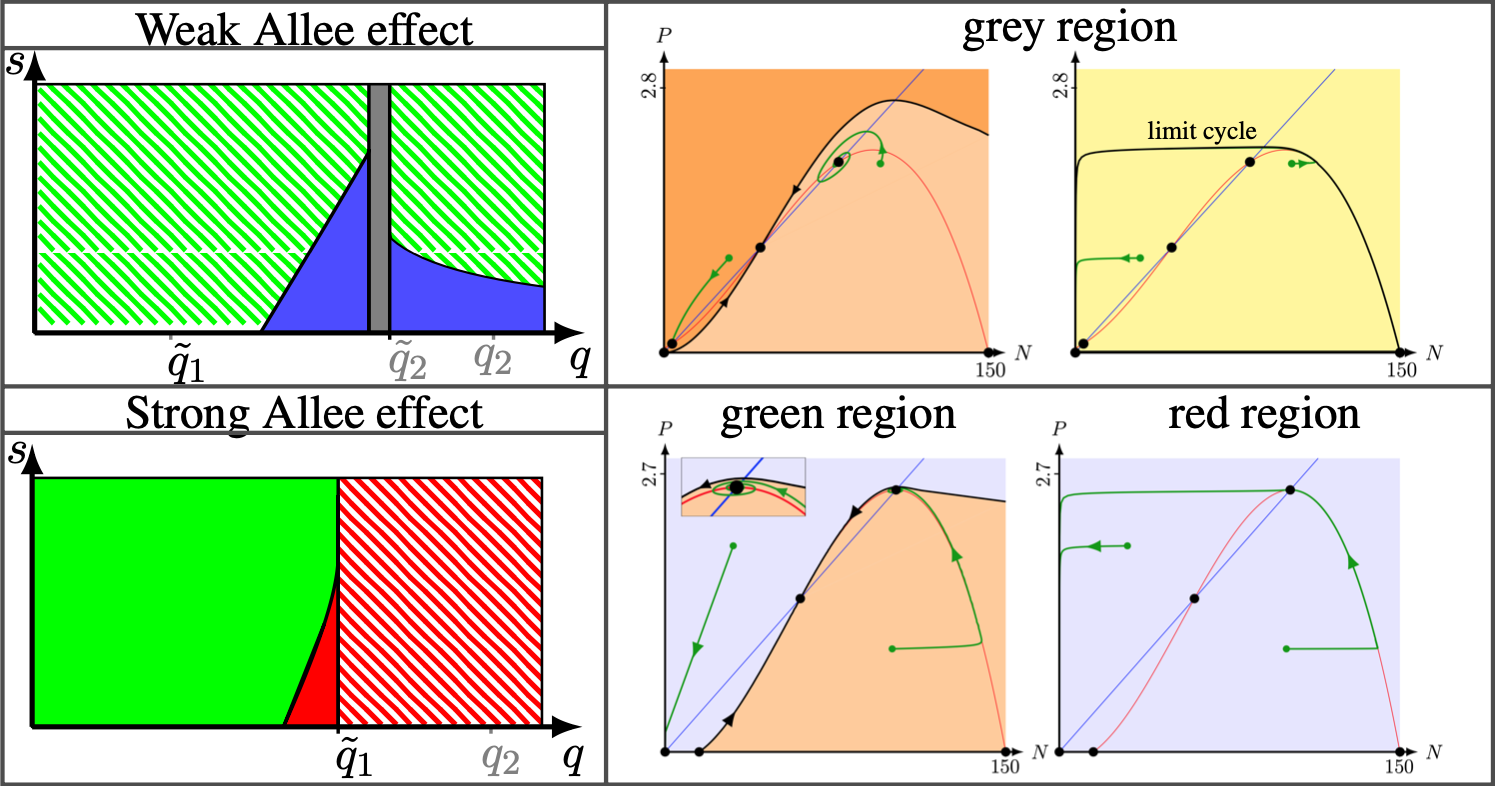}
\includegraphics[width=14cm]{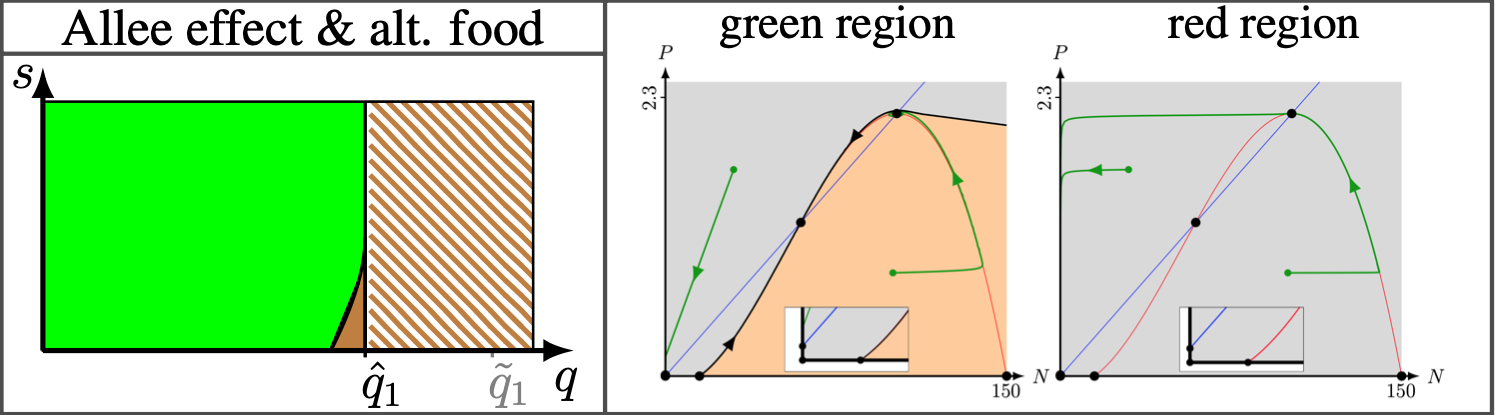}
\caption{The phase plane and bifurcation diagrams of modified May--Holling--Tanner models (~\eqref{eq5},~\eqref{eq7},~\eqref{eq8} and~\eqref{eq1}) for the system parameters $(r,K,a,n,c,m)=(4,150,6,0.025,0.01,\pm15)$ (see Table ~\ref{T02}) fixed and created with the numerical bifurcation package MATCONT~\cite{matcont2}.}  
\label{FIG05}
\end{figure}

Figure~\ref{FIG07} shows the time series of the predator and prey populations in the original May--Holling--Tanner model~\eqref{eq8}, the May--Holling--Tanner model with alternative food for the predator~\eqref{eq7} and the May--Holling--Tanner model with Allee effect on the prey~\eqref{eq5}. The system parameters are as in Table~\ref{T02} and $(q,s)=(700,1.25)$. Note that these values lie inside the solid or hatched green regions in Figure~\ref{FIG05}. Moreover, different predator and prey initial densities are taken into account to highlight the different types of behaviours. If the parameters $(q,s)$ are located in the blue region of Figure~\ref{FIG05}, then the original model supports oscillation of the solutions and it does not depend on the initial conditions, see the first column of Figure~\ref{FIG07}. That is, every initial density evolves to the coexistence of the species. If the parameters $(q,s)$ are located in the green region of Figure~\ref{FIG05}, then the May--Holling--Tanner model with strong Allee effect ($m>0$) supports both coexistence and extinction of both species. That is, there exists an initial density which evolves to the coexistence of the species, while there are also initial conditions which evolve to the extinction of both species (see second column of Figure~\ref{FIG07}). 

If the parameters $(q,s)$ are located in the grey region of Figure~\ref{FIG05}, then the May--Holling--Tanner model with weak Allee effect ($m<0$) supports the stabilisation of both populations to two different equilibrium points. That is, there exists an initial density which evolves to a coexistence point, while there are also initial conditions which evolve to a lower coexistence point (see third column of Figure~\ref{FIG01}). Finally, the modified May--Holling--Tanner model with an alternative food source for the predator supports, depending on the initial conditions, both oscillation and the extinction of the prey species only. That is, there exists as initial density which evolves to a limit cycle which represents the oscillation of the species, while there are also initial conditions which evolve to the extinction of the prey and the stabilisation of the predator population (see fourth column of Figure~\ref{FIG01}).

\begin{figure}[h]
\centering
\includegraphics[width=16cm]{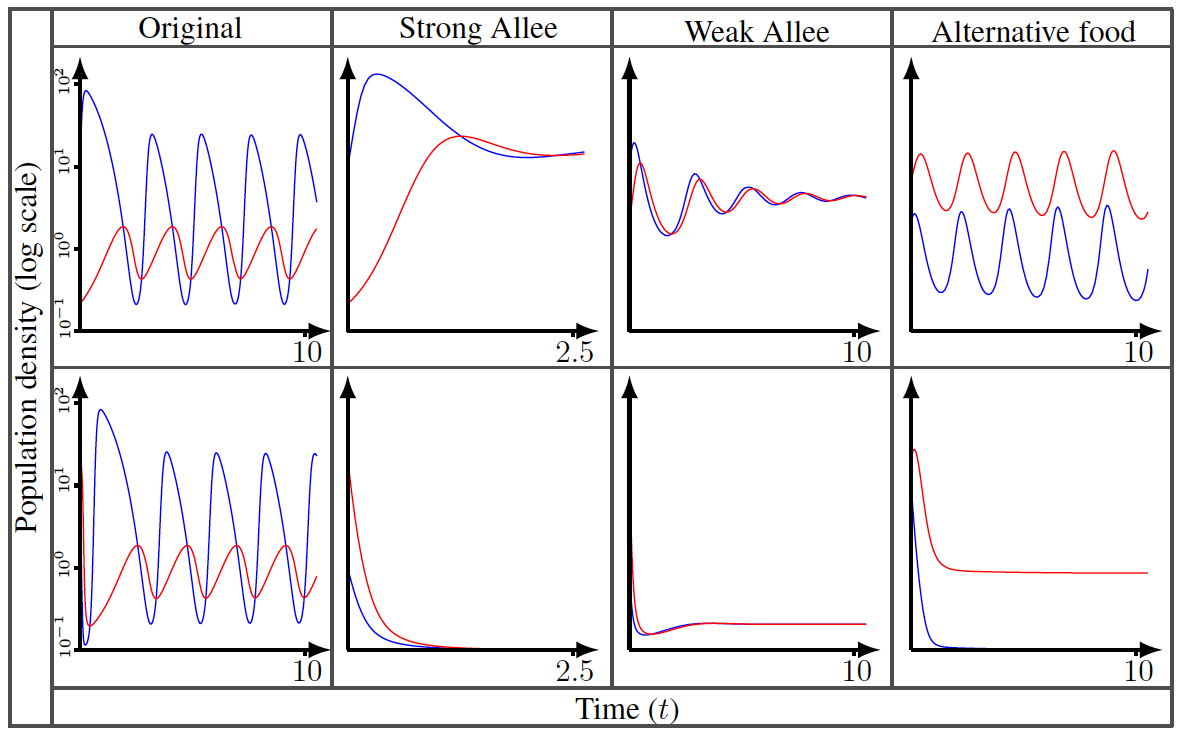}
\caption{The characteristics of predator-prey populations in a log scale in the original May--Holling--Tanner model~\eqref{eq1}, the May--Holling--Tanner model with alternative food~\eqref{eq7}, the May--Holling--Tanner model with strong Allee effect~\eqref{eq5} ($m>0$) and the May--Holling--Tanner model with weak Allee effect~\eqref{eq5} ($m<0$) over time ($t$). In all the cases, the dynamics are obtained through numerical integration considering different initial densities and the system parameters are as in Table~\eqref{T02} and $(q,s)=(700,1.25)$.}
\label{FIG07}
\end{figure}

\section{Conclusion}\label{co}
In this manuscript, the May--Holling--Tanner predator-prey model with functional response Holling type II~\eqref{eq1} was reviewed. We also considered that the prey population is affected by strong and weak Allee effect~\eqref{eq4} and the predator population has an alternative food source~\eqref{eq6}. Using a diffeomorphism we extend the analysis of a topologically equivalent system~\eqref{eq5},~\eqref{eq7},~\eqref{eq8} and~\eqref{eq1}.   

We have also extended the analysis by using the system parameters presented in~\cite{hanski,turchin}, see Table~\ref{T02}. In addition, we have shown that the predation rate per capita (parameter $q$) impacts the number of equilibrium points in the first quadrant while the intrinsic growth rate of the predator (parameter $s$) changes the behaviour of the positive(s) equilibrium point(s), see Figure~\ref{FIG05}. Therefore, it is clear that the self-regulation depends on the values of these two parameters. Consequently, we have shown that the original May--Holling--Tanner model with a strong Allee effect ($m>0$), a weak Allee effect ($m<0$) and with both modifications, i.e. Allee effects (strong and weak) for the prey and alternative food for the predator, can mean extinction and coexistence of prey and predator populations, depending on particular conditions. This is because the stable manifold of a saddle equilibrium point or an unstable limit cycle act as a separatrix between the basin of attraction of the origin and a stable equilibrium point in the first quadrant. However, the original May--Holling--Tanner model with a weak Allee effect ($m<0$) and with alternative food for the predator are present with the coexistence or oscillation of the population. This is due to the existence of at least one positive equilibrium point which can be either stable, unstable surrounded by a stable limit cycle or stable surrounded by two limit cycles. 

In Figure~\ref{FIG08} we observe that the introduction of a density dependent phenomenon in the prey population and/or an alternative food source for the predator population have an impact on the predation rate (parameter $q$). The predation rate at which the Hopf curve of the Holling--Tanner model with Allee effect~\eqref{eq5} is reduced, see the top right panel of Figure~\ref{FIG08}. In particular, the Hopf curve of the model with strong Allee effect shows that system~\eqref{eq5} can have up to two positive equilibrium points which can collapse at $q=\tilde{q}_1$ (respectively for $m<0$ at $q=\tilde{q}_2$). Therefore, for $q<\tilde{q}_1$ system~\eqref{eq5} presents two positive equilibrium points and thus the coexistence of both populations is observed~\cite{arancibia3}. 
 
\begin{figure}[h!]
\centering
\includegraphics[width=15cm]{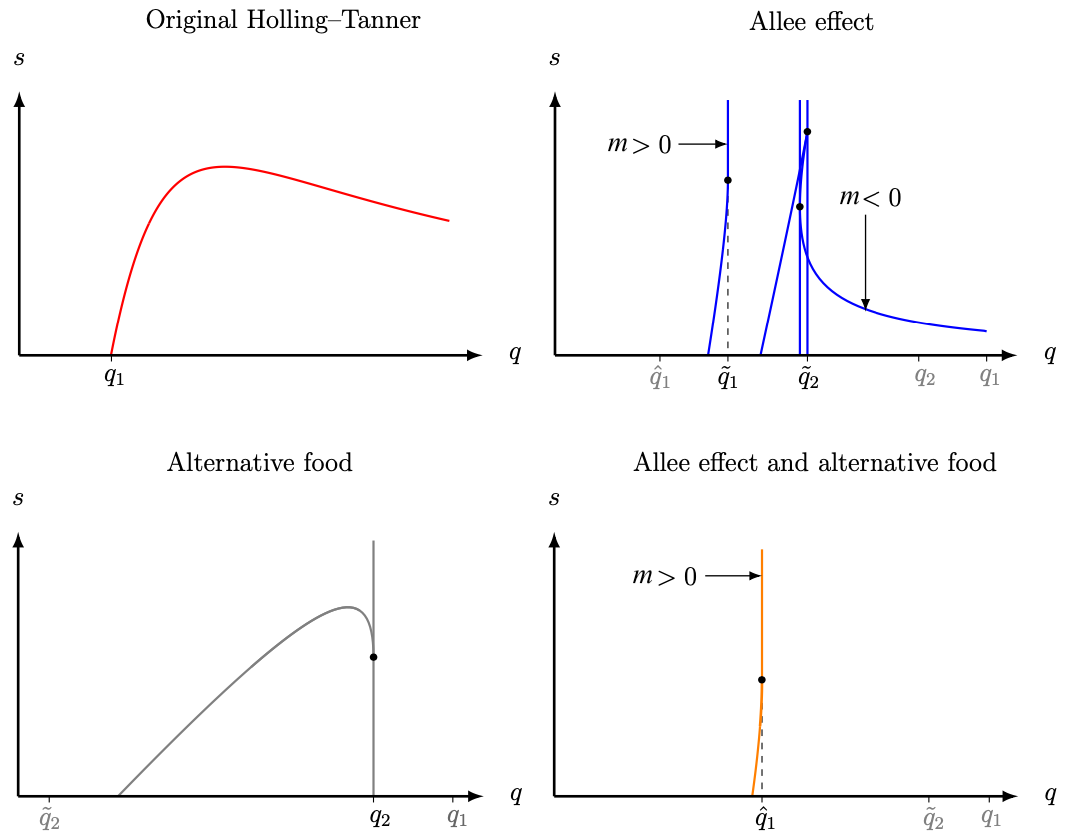}
\caption{In the top left panel we present the Hopf bifurcation curve of the original May--Holling--Tanner model~\eqref{eq1} (red curve). In the top right panel, we present the Hopf bifurcation curve of the May--Holling--Tanner model with Allee effect~\eqref{eq5} (blue curve). While, in the bottom left panel we present the Hopf bifurcation curve of the May--Holling--Tanner model with alternative food for the predator~\eqref{eq7} (grey curve) and in the bottom right panel the Hopf bifurcation curve of the May--Holling--Tanner model with both modification~\eqref{eq8} (orange curve) is presented. In all panels we used the system parameters $(r,K,a,n,c,m)=(4,150,6,0.025,0.01,\pm15)$ (see Table ~\ref{T02}) fixed and created with the numerical bifurcation package MATCONT~\cite{matcont2}.}
\label{FIG08}
\end{figure}

In contrast, the Hopf curve of the model with week Allee effect ($m<0$) shows that if $q>\tilde{q}_2$ then system~\eqref{eq5} with $m<0$ has always one positive equilibrium point which is stable or unstable surrounded by a stable limit cycle and thus the coexistence or oscillation of both population is observed. Note that the dynamic of system~\eqref{eq5} with $m<0$ is similar with the dynamics of system~\eqref{eq1} since in both systems there is always one positive equilibrium point. Consequently, there is a coexistence and/or oscillation of predator and prey population. 

We also consider the Hopf curve of the Holling--Tanner model with alternative food for predators~\eqref{eq7} and the system parameters $(r,K,a,n,c,m)=(4,150,6,0.025,0.01)$ (see Table ~\ref{T02}) fixed, see the bottom left panel of Figure \ref{FIG08}. In particular, the Hopf curve for the original May--Holling--Tanner model, considering an alternative food for predators, shows that the positive equilibrium points collapse at $q=q_2$. Therefore, for $q<q_2$ system~\eqref{eq7} present also two positive equilibrium points. One of them is always a saddle point and the other can be stable or unstable surrounded by stable limit cycle and thus the coexistence or oscillation of both population is also observed~\cite{arancibia5,arancibia9}. Note that if $q>q_2$ the equilibrium point related to the alternative food is globally stable. Therefore, system~\eqref{eq7} supports the stabilisation of only the predator population and the extinction of the prey population.

Next, we consider the Hopf curve of the May--Holling--Tanner model with both modification~\eqref{eq8}, i.e. a strong Allee effect on the prey and alternative food for the predator, and the system parameters $(r,K,a,n,c,m)=(4,150,6,0.025,0.01,\pm15)$ (see Table ~\ref{T02}) fixed, see the bottom right panel of Figure~\ref{FIG08}. In particular, the modified system~\eqref{eq8} can have up to two positive equilibrium points which can collapse at  $q=\hat{q}_1$. Therefore, for $q<\hat{q}_1$ system~\eqref{eq8} support the coexistence of both population or the extinction of only the prey population. 

In summary, the bifurcation diagrams of the models studied in~\cite{saez,arancibia3,arancibia} are often qualitatively similar but their solutions behave quantitatively differently. In other words, it is observed that there are two groups of modes which support equivalent ecological behaviour due to the addition of the modifications in the May--Holling--Tanner model. That is, an alternative food source for the predator, a strong Allee effect or the combination of both modifications support the coexistence and the extinction of at least one of the species. In contrast, the original model and the model with weak Allee effect ($m<0$) do not support the extinction of one of the species.

\section*{Acknowledgements}
This project was fully supported by Universidad de las Américas (UDLA) Chile. Especially, authors would like to give special thanks to Jaime Garcia for his support in the edition of this manuscript. 

\section*{Conflict of interest}
The authors declare that no conflict of interest.

\appendix
\section*{Appendices}

\section{The modified May--Holling--Tanner model with weak Allee effect~\eqref{eq12} ($M<0$).}

The May--Holling--Tanner model with a strong Allee effect ($m>0$) was studied in~\cite{arancibia3} in which the authors simplified the system by introducing a change of variable and time rescaling. This is given by the function $\phi :\breve{\Omega}\times\mathbb{R}\rightarrow \Omega\times\mathbb{R}$, where $\phi(u,v,\tau)=(N,P,t)=(Ku,nKv,\tau u(u+a/K)/rK)$. Moreover, by setting  
\begin{equation}\label{eq11} 
M:=\dfrac{m}{K}<1,~Q:=\dfrac{qn}{rK},~S:=\dfrac{s}{rK}~\mbox{and}~A=\dfrac{a}{K} 
\end{equation}
into system~\eqref{eq5} we obtain 
\begin{equation}\label{eq12}
\begin{aligned}
	\dfrac{du}{d\tau} & = u^2((u+A)(1-u)(u-M)-Qv) \,,\\
	\dfrac{dv}{d\tau} & =  Sv(u+A)(u-v) \,.
\end{aligned}
\end{equation}
\begin{figure}
\centering
\includegraphics[width=5.5cm]{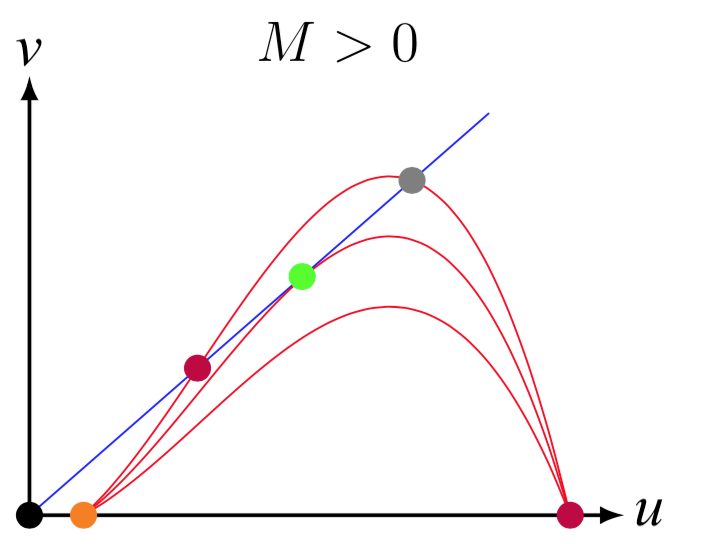}
\includegraphics[width=5.5cm]{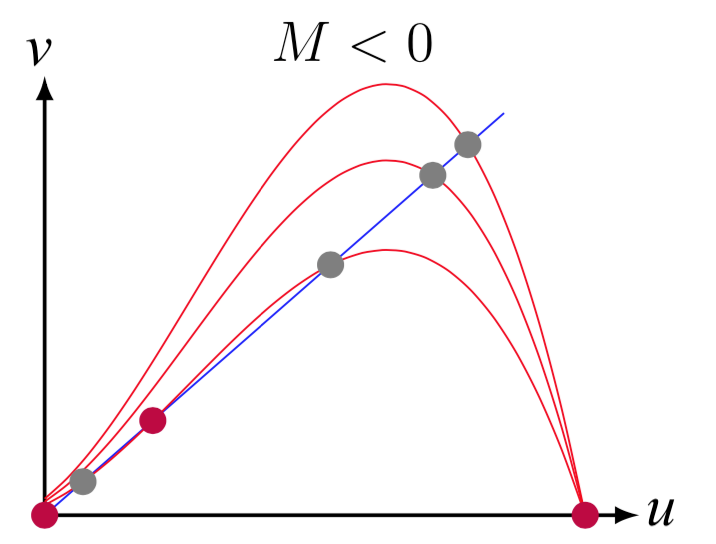}
\caption{The intersection of the predator nullcline (blue line) and the prey nullcline (red curve) in the May--Holling--Tanner model with strong ($M>0$) and weak ($M<0$) Allee effects by changing the parameter $Q$. The dark red circle represents a saddle equilibrium point, the grey circles represent an equilibrium point which can be stable or unstable, the black circle represents an equilibrium point which is always stable, the orange circle represents an equilibrium point which is always unstable and the green circle represents an equilibrium point which is the collision of two equilibrium points.}
\label{FIG03}
\end{figure} 

To analyse the positive solution(s) of system~\eqref{eq12} we consider three cases of the intersection of the nullcline of system~\eqref{eq12} and $M<0$. The $u$-nullclines of system~\eqref{eq12} are $u=0$ and $v=(u+A)(1-u)(u-M)/Q$, while the $v$-nullclines are $v=0$ and $v=u$. Hence, the equilibrium points for system~\eqref{eq12} are $(0,0)$, $(1,0)$ and the point(s) $(u^*,v^*)$ with $v^*=u^*$ and where $u^*$ is determined by the solution(s) of
\begin{equation}\label{eq16}
\begin{aligned}
u^3-(M+1-A)u^2-(A(M+1)-Q-M)u+AM=u^3-H(A,M)u^2-L(A,M,Q)u+AM=0,
\end{aligned}
\end{equation}
where $H(A,M)=M+1-A$ and $L(A,M,Q)=A(M+1)-Q-M$. System~\eqref{eq12} always has one positive equilibrium point, which we denote by $P_1=(u_1,u_1)$ where $0<u_1<1$. Factoring out $(u-u_1)$ from the equation resultant~\eqref{eq16} of the intersection of the nullcline of system~\eqref{eq12}~\cite{arancibia3}, leaves as with second order polynomial equation
\begin{equation}\label{eq17}
u^2-(1-A+M-u_1)u-\dfrac{AM}{u_1}=0
\end{equation}
Therefore, if $1-A+M-u_1>0$ then the roots of~\eqref{eq17} are given by
\begin{equation}\label{eq18}
u_{2,3}=\dfrac{1}{2}\left(1-A+M-u_1\pm\sqrt{\Delta}\right),~\text{with}~\Delta=(1-A+M-u_1)^2+\dfrac{4AM}{u_1},
\end{equation}
such that $0<u_2<u_3<1$.

\subsection{Stability of positive equilibrium points:}

To determinate the stability of the positive equilibrium points $P_1$, $P_2$, and $P_3$ of system~\eqref{eq12} with $M<0$ we compute the Jacobian matrix~\eqref{eq23_M} at these equilibrium points
\begin{equation}\label{eq23_M}
J(u_i,u_i)=\begin{pmatrix}
u_i^2((1-u_i)(u_i-M)+(u_i+A)(1-u_i)-(u_i+A)(u_i-M)) & -Qu_i^2 \\ 
Su_i(u_i+A)  &  -Su_i(u_i+A) 
\end{pmatrix},
\end{equation}
with $i=1,2,3$. Thus, the trace and the determinant of the Jacobian matrix~\eqref{eq23_M} are
\begin{equation}\label{eq19}
\begin{aligned}
\det{(P_{i})}&=Su_i^2(u_i+A)h(u_i)\\
\tr{(P_{i})}&=u_i(u_i+A)\left(f(u_i)-S\right),	
\end{aligned}
\end{equation}
with 
\begin{equation}\label{eq19_2}
\begin{aligned}
h(u_i)&=u_i^2(2u_i-(1-A+M))-AM~\text{and}\\
f(u_i)&=\dfrac{u_i}{u_i+A}\left((1-u_i)(u_i-M)+(u_i+A)(1-u_i)-(u_i+A)(u_i-M)\right).
\end{aligned}
\end{equation}
Then, we consider the stability of these positive equilibrium points $P_1$, $P_2$ and $P_3$ of system~\eqref{eq12} with $M<0$ in the interior of $\Phi=\{\left(u,v\right),\ 0<u\leq1,\ v\geq0\}$.

\begin{lemm}\label{CH8:N1}
Let the system parameters of~\eqref{eq12} be such that $1-A+M\leq0$ or $1-A+M>0$ and $A(M+1)-Q-M\geq0$. Then system~\eqref{eq12} has only one positive equilibrium point $(u_1,u_1)$ which can be
\begin{enumerate}[label=(\roman*)]
\item a repeller if $S<f(u_1)$, 
\item an attractor if $S>f(u_1)$,
\end{enumerate}
with $f(u_*)$ defined in \eqref{eq19_2}.
\end{lemm}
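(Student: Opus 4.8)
The plan is to split the argument into two independent parts: first to show that $(u_1,u_1)$ is the only positive equilibrium under the stated hypotheses, and then to read off its type from the sign of the trace in~\eqref{eq19}, after confirming that the determinant is positive so that the equilibrium is genuinely a node or focus and not a saddle. Throughout I would use that $M<0$ and $A,Q,S>0$, together with the factorisation $P(u)=(u-u_1)g(u)$ underlying~\eqref{eq16}--\eqref{eq17}, where $g(u)=u^2-(1-A+M-u_1)u-AM/u_1$ is the quadratic cofactor.

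For uniqueness, the only further candidates for positive equilibria are the roots $u_2,u_3$ of~\eqref{eq17}. Writing $\sigma=1-A+M-u_1$ for their sum and $\pi=-AM/u_1$ for their product, the crucial observation is that $\pi>0$ because $M<0$ while $A,u_1>0$; hence $u_2$ and $u_3$, when real, share a common sign, and it suffices to prove $\sigma<0$ to force both of them to be negative (or complex). In the first regime $1-A+M\leq0$ this is immediate, since $\sigma\leq-u_1<0$. In the second regime I would exploit that $u_1$ solves~\eqref{eq16}, which rearranges to $\pi=u_1^2-(1-A+M)u_1-L$ with $L=A(M+1)-Q-M$; since $\pi>0$ and the hypothesis gives $L\geq0$, this yields $u_1\big(u_1-(1-A+M)\big)>0$, so $u_1>1-A+M$ and therefore $\sigma=(1-A+M)-u_1<0$ once more. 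Either way the quadratic factor has no positive root and $(u_1,u_1)$ is the unique positive equilibrium. I expect this to be the main obstacle, as the passage from $L\geq0$ to the location of $u_1$ relative to $1-A+M$ is the only nonroutine ingredient.

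For the stability I would first establish $\det(P_1)>0$. Substituting $AM=-u_1^3+(1-A+M)u_1^2+Lu_1$ (again from $u_1$ being a root of~\eqref{eq16}) into $h(u_1)$ from~\eqref{eq19_2} collapses it to $h(u_1)=u_1\big(3u_1^2-2(1-A+M)u_1-L\big)$, which is exactly $u_1$ times the value of the cofactor $g$ at $u_1$, namely $h(u_1)=u_1\,g(u_1)$. Because $\sigma<0$ and $\pi>0$, we have $g(u)=u^2-\sigma u+\pi>0$ for every $u\geq0$, so $h(u_1)>0$ and hence $\det(P_1)=Su_1^2(u_1+A)h(u_1)>0$. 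Consequently the equilibrium is never a saddle, and its type is decided entirely by the trace.

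Finally, from~\eqref{eq19} the trace is $\tr(P_1)=u_1(u_1+A)\big(f(u_1)-S\big)$ with $u_1(u_1+A)>0$, so its sign coincides with that of $f(u_1)-S$. Combining with $\det(P_1)>0$: if $S<f(u_1)$ the trace is positive, both eigenvalues have positive real part, and $(u_1,u_1)$ is a repeller, giving (i); if $S>f(u_1)$ the trace is negative and $(u_1,u_1)$ is an attractor, giving (ii). The borderline $S=f(u_1)$, where the trace vanishes, is the Hopf threshold and lies outside this dichotomy.
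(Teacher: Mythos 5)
Your proposal is correct, and it takes a genuinely different --- and in fact more complete --- route than the paper. The paper's proof only addresses the sign of the determinant: it splits into the three cases $1-A+M<0$, $=0$, $>0$ and checks directly in each that $h(u_1)=u_1^2\bigl(2u_1-(1-A+M)\bigr)-AM>0$ (invoking the cubic relation~\eqref{eq16} only in the third case to rewrite this as $u_1^3+\bigl(A(1+M)-Q-M\bigr)u_1-2AM$), and then concludes from the trace in~\eqref{eq19}; it never actually proves the uniqueness assertion in the statement. You instead organise everything around the quadratic cofactor $g$ of~\eqref{eq17}: Vieta's relations give $\pi=-AM/u_1>0$ from $M<0$, and the hypotheses (in either regime) force $\sigma=1-A+M-u_1<0$, which simultaneously kills any further positive root --- establishing the uniqueness the paper omits --- and, via the identity $h(u_1)=u_1\,g(u_1)$ with $g>0$ on $[0,\infty)$, yields $\det(J(P_1))>0$ in one stroke rather than by case analysis. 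The trace argument at the end is the same in both proofs. Your identity $h(u_1)=u_1 g(u_1)$ (equivalently $h(u_1)=u_1 P'(u_1)$ for the cubic $P$) checks out, as does the passage from $L\geq0$ and $\pi>0$ to $u_1>1-A+M$; the only thing you lean on without proof is the existence of $u_1\in(0,1)$, which the paper itself asserts just before the lemma, so that is fair game.
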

\begin{proof}
Evaluating $u^2(2u-(1-A+M))-AM$ at $u=u_1$ gives: 
$$u^2(2u-(1-A+M))-AM=u_1^2(2u_1-(1-A+M))-AM.$$ 
Then,
\begin{enumerate}[label=(\roman*)]
\item if $1-A+M<0$ then $u_1^2(2u_1-(1-A+M))-AM>0$. Hence $\det(J(P_1))>0$ since $M<0$,
\item if $1-A+M=0$ then $2u_1^3-AM>0$. Hence $\det(J(P_1))>0$,
\item if $1-A+M>0$ then $u_1^2(2u_1-(1-A+M))-AM=u_1^3+(A(1+M)-Q-M)u_1-2AM>0$. Hence $\det(J(P_1))>0$ since $A(M+1)-Q-M\geq0$.
\end{enumerate}
Therefore, the sign of the trace~\eqref{eq19}, and thus the behaviour of $(u_1,u_1)$ depends on the parity of $f(u_1)-S$. 
\end{proof}

\begin{lemm}\label{CH8:N2}
Let the system parameters of~\eqref{eq12} be such that $1-A+M>0$, $A(M+1)-Q-M<0$ and $\Delta>0$~\eqref{eq18}. Then the equilibrium point $P_1$ is
\begin{enumerate}[label=(\roman*)]
\item a saddle point if $h(u_1)<0$
\item a repeller if $h(u_1)>0$ and $S<f(u_1)$, 
\item an attractor if $h(u_1)>0$ and $S>f(u_1)$.
\end{enumerate}
with $f(u_*)$ and $h(u_*)$ defined in \eqref{eq19_2}.
\end{lemm}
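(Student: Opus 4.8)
The plan is to read off the local type of $P_1=(u_1,u_1)$ directly from the trace and determinant of the Jacobian~\eqref{eq23_M}, whose closed forms are already recorded in~\eqref{eq19}. The key preliminary observation is that on the relevant domain every prefactor appearing there is strictly positive: since $0<u_1<1$ and $A>0$ we have $u_1>0$, $u_1^2>0$ and $u_1+A>0$, and $S>0$ by hypothesis. Hence $\det(P_1)=Su_1^2(u_1+A)h(u_1)$ has exactly the sign of $h(u_1)$, and $\tr(P_1)=u_1(u_1+A)(f(u_1)-S)$ has exactly the sign of $f(u_1)-S$. The entire classification therefore collapses to the signs of the two scalar quantities $h(u_1)$ and $f(u_1)-S$ from~\eqref{eq19_2}, and the three alternatives in the statement are simply the three generic sign combinations.

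For (i), if $h(u_1)<0$ then $\det(P_1)<0$, so the eigenvalues of~\eqref{eq23_M} are real with opposite signs and $P_1$ is a saddle. This is genuinely new relative to Lemma~\ref{CH8:N1}: there the hypotheses $1-A+M\le0$ or $A(M+1)-Q-M\ge0$ forced $h(u_1)>0$, whereas under the present hypotheses $1-A+M>0$ together with $A(M+1)-Q-M<0$ the quantity $h(u_1)$ is no longer sign-definite, which is exactly what permits the three coexisting positive equilibria guaranteed by $\Delta>0$ in~\eqref{eq18}.

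For (ii) and (iii), suppose $h(u_1)>0$, so $\det(P_1)>0$ and $P_1$ is a node or focus whose stability is decided by the trace. If $S<f(u_1)$ then $\tr(P_1)>0$ and $P_1$ is a repeller; if $S>f(u_1)$ then $\tr(P_1)<0$ and $P_1$ is an attractor. In both cases the strict inequalities keep $\det(P_1)\ne0$ and $\tr(P_1)\ne0$, so $P_1$ is hyperbolic and the Hartman--Grobman theorem transfers the linear classification to the full nonlinear flow of~\eqref{eq12}; the boundary cases $h(u_1)=0$ and $S=f(u_1)$ are excluded by the strict hypotheses, so the case split is exhaustive among the non-degenerate possibilities.

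I do not expect the linear algebra to be the obstacle here: once~\eqref{eq19} and~\eqref{eq19_2} are in hand the argument is a one-line sign check in each case. The only point demanding care is the sign bookkeeping, above all remembering that $M<0$ (so that, for instance, the term $-AM$ inside $h$ is positive), which is what makes the sign of $h(u_1)$ depend delicately on the cubic root $u_1$. The real work sits in the surrounding results rather than in this lemma: establishing that under $1-A+M>0$, $A(M+1)-Q-M<0$ and $\Delta>0$ the root $u_1$ still lies in $(0,1)$ and that $h(u_1)$ can actually realise both signs, so that alternatives (i)--(iii) are each non-vacuous.
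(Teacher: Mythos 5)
Your proposal is correct and follows essentially the same route as the paper: both arguments reduce the classification to the signs of $\det(P_1)$ and $\tr(P_1)$ via the factorisations in~\eqref{eq19}, observing that the prefactors $Su_1^2(u_1+A)$ and $u_1(u_1+A)$ are positive so that everything hinges on the signs of $h(u_1)$ and $f(u_1)-S$. Your additional remarks on hyperbolicity and the contrast with Lemma~\ref{CH8:N1} go slightly beyond what the paper writes down, but the core argument is identical.
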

\begin{proof}
Evaluating $u^2(2u-(1-A+M))-AM$ at $u=u_1$ gives: 
$$u^2(2u-(1-A+M))-AM=u_1^2(2u_1-(1-A+M))-AM.$$ 
Then, the sign of the determinant and the trace~\eqref{eq19}, and thus the behaviour of $P_1$ depends on the parity of $h(u_1)$ and $f(u_1)-S$. 
\end{proof}

\begin{lemm}\label{CH8:p2}
Let the system parameters of~\eqref{eq12} be such that $1-A+M>0$, $A(M+1)-Q-M<0$ and $\Delta>0$~\eqref{eq18}. Then the equilibrium point $P_2$ is
\begin{enumerate}[label=(\roman*)]
\item a saddle point if $u_2>u_1$
\item a repeller if $u_2<u_1$ and $S<f(u_2)$, 
\item an attractor if $u_2<u_1$ and $S>f(u_2)$,
\end{enumerate}
with $f(u_*)$ defined in \eqref{eq19_2}.
\end{lemm}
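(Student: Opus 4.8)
The plan is to classify $P_2=(u_2,u_2)$ from the signs of $\det(P_2)$ and $\tr(P_2)$ in \eqref{eq19}, exactly in the spirit of the proof of Lemma~\ref{CH8:N2}. Since $0<u_2<1$ and $A>0$ force $u_2>0$ and $u_2+A>0$, and $S>0$, the factorisation in \eqref{eq19} shows that $\det(P_2)$ has the same sign as $h(u_2)$ and $\tr(P_2)$ has the same sign as $f(u_2)-S$, with $h$ and $f$ as in \eqref{eq19_2}. The whole statement therefore reduces to pinning down the sign of $h(u_2)$, and then, once that sign is known to be positive, to reading off the trace.

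The key step is an algebraic identity linking $h$ to the equilibrium cubic \eqref{eq16}. Write $g(u)=u^3-H(A,M)u^2-L(A,M,Q)u+AM$ for the cubic in \eqref{eq16}, whose positive roots are $u_1,u_2,u_3$. A direct expansion (using $H=1-A+M$) gives the polynomial identity $h(u)=u\,g'(u)-g(u)$. Because $u_2$ is a root of $g$, this collapses to $h(u_2)=u_2\,g'(u_2)$, and since $u_2>0$ the sign of $h(u_2)$ equals the sign of $g'(u_2)$. This is what lets me avoid any direct estimation of $h(u_2)=u_2^2(2u_2-(1-A+M))-AM$, which would otherwise be the awkward computation.

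It then remains to read the sign of $g'(u_2)$ off the position of $u_2$ among the three roots. The hypothesis $\Delta>0$ together with the factorisation \eqref{eq17}--\eqref{eq18} guarantees three distinct real roots with $0<u_2<u_3<1$ by the naming convention there, so $u_2$ is never the largest root; hence only the two cases in the statement can occur. For a cubic with positive leading coefficient, $g'$ is positive at the smallest root, negative at the middle root, and positive at the largest. Thus if $u_2>u_1$ then $u_1<u_2<u_3$, so $u_2$ is the middle root, $g'(u_2)<0$, whence $h(u_2)<0$ and $\det(P_2)<0$: the eigenvalues are real of opposite sign and $P_2$ is a saddle, giving (i). If instead $u_2<u_1$, then $u_2<\min\{u_1,u_3\}$ makes $u_2$ the smallest root, so $g'(u_2)>0$, $h(u_2)>0$ and $\det(P_2)>0$.

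In this last case the type of $P_2$ is settled by the trace: with $\det(P_2)>0$, the equilibrium is a repeller when $\tr(P_2)>0$, i.e. $S<f(u_2)$ (giving (ii)), and an attractor when $\tr(P_2)<0$, i.e. $S>f(u_2)$ (giving (iii)). The main obstacle is the determinant sign; the identity $h=u\,g'-g$ together with the ordering $u_2<u_3$ is precisely what makes it tractable, and the strict inequalities $u_2>u_1$ and $u_2<u_1$ in the statement encode the non-degenerate situation of three distinct roots that this argument needs. The only point requiring genuine care is justifying $u_2<u_3$ directly from $\Delta>0$ and \eqref{eq18}, which is what eliminates the possibility of $u_2$ being the largest root and reduces the analysis to exactly the two cases listed.
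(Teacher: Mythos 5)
Your proof is correct and reaches the same classification the paper does: reduce everything to the signs of $\det(P_2)=Su_2^2(u_2+A)h(u_2)$ and of the trace via $f(u_2)-S$, and show that $h(u_2)$ has the sign of $u_1-u_2$. The only place you diverge from the paper is in how that last sign is obtained. The paper substitutes the explicit root $u_2=\tfrac{1}{2}(1-A+M-u_1-\sqrt{\Delta})$ into $h$ and simplifies by brute force to $h(u_2)=u_2\sqrt{\Delta}\,(u_1-u_2)$. You instead observe the identity $h(u)=u\,g'(u)-g(u)$ for the monic cubic $g$ of \eqref{eq16}, so that at any root $h(u_i)=u_ig'(u_i)=u_i\prod_{j\neq i}(u_i-u_j)$; with $u_3-u_2=\sqrt{\Delta}>0$ this gives exactly the paper's formula $h(u_2)=u_2(u_1-u_2)(u_3-u_2)$ without any substitution. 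Your route is cleaner and explains \emph{why} the sign is governed by the position of $u_2$ among the roots (it also handles all three lemmas \ref{CH8:N2}--\ref{CH8:p3} uniformly), at the cost of having to justify the ordering $u_2<u_3$ from the $\pm$ convention in \eqref{eq18} — which you correctly flag as the one point needing care; the paper's direct computation carries that information automatically through the explicit $\sqrt{\Delta}$. The remaining steps (negative determinant forces a saddle; positive determinant plus the sign of $f(u_2)-S$ decides repeller versus attractor) match the paper exactly.
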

\begin{proof}
Evaluating $u^2(2u-(1-A+M))-AM$ at $u=u_2=(1-A+M-u_1-\sqrt{\Delta})/2$ gives: 
\[\begin{aligned}
u^2(2u-(1-A+M))-AM=&\dfrac{\sqrt{\Delta}}{4}(-1-A-M+3u_1+\sqrt{\Delta})\\
&(1-A+M-u_1-\sqrt{\Delta})=u_2\sqrt{\Delta}(u_1-u_2). 
\end{aligned}\]
Then, the sign of the determinant depends on the parity of $u_1-u_2$. Then, the trace~\eqref{eq19}, and thus the behaviour of $P_2$ depends on the parity of $f(u_2)-S$. 
\end{proof}
\begin{lemm}\label{CH8:p3}
Let the system parameters of~\eqref{eq12} be such that $1-A+M>0$, $A(M+1)-Q-M<0$ and $\Delta>0$~\eqref{eq18}. Then the equilibrium point $P_3$ is
\begin{enumerate}[label=(\roman*)]
\item a saddle point if $u_3<u_1$
\item a repeller if $u_3>u_1$ and $S<f(u_3)$, 
\item an attractor if $u_3>u_1$ and $S>f(u_3)$,
\end{enumerate}
with $f(u_*)$ defined in \eqref{eq19_2}.
\end{lemm}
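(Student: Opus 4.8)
The plan is to follow exactly the template established in the proof of Lemma~\ref{CH8:p2}, exploiting the determinant and trace formulas in~\eqref{eq19}. The stability type of $P_3$ is governed by $\det(J(P_3))=Su_3^2(u_3+A)h(u_3)$ and $\tr(J(P_3))=u_3(u_3+A)(f(u_3)-S)$, with $h$ and $f$ as in~\eqref{eq19_2}. Since $u_3>0$, $u_3+A>0$ and $S>0$ throughout the region $\Phi$, the sign of the determinant is entirely controlled by the sign of $h(u_3)$, and, whenever the determinant is positive, the type of the hyperbolic equilibrium is controlled by the sign of the trace, hence by the sign of $f(u_3)-S$. Existence of $P_3$ as a genuine interior equilibrium with $0<u_2<u_3<1$ is guaranteed by $\Delta>0$ together with the standing hypotheses, as established in the discussion preceding~\eqref{eq18}.

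The key step is to evaluate $h$ at $u_3=(1-A+M-u_1+\sqrt{\Delta})/2$ and bring it to a factored form. I would do this through the Vieta relations for the reduced quadratic~\eqref{eq17}, whose roots are $u_2$ and $u_3$: namely $u_2+u_3=1-A+M-u_1$ and $u_2u_3=-AM/u_1$, equivalently $1-A+M=u_1+u_2+u_3$ and $AM=-u_1u_2u_3$. Substituting these into $h(u)=u^2(2u-(1-A+M))-AM$ collapses the expression to $h(u_3)=u_3(u_3-u_1)(u_3-u_2)$. Since $u_3-u_2=\sqrt{\Delta}$ follows directly from~\eqref{eq18}, this becomes $h(u_3)=u_3\sqrt{\Delta}(u_3-u_1)$, the exact mirror of the identity $h(u_2)=u_2\sqrt{\Delta}(u_1-u_2)$ obtained for $P_2$.

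With the factored form in hand the classification is immediate. Because $u_3\sqrt{\Delta}>0$, the sign of $h(u_3)$ coincides with the sign of $u_3-u_1$. If $u_3<u_1$ then $h(u_3)<0$, so $\det(J(P_3))<0$ and $P_3$ is a saddle point, establishing~(i). If $u_3>u_1$ then $h(u_3)>0$, so $\det(J(P_3))>0$ and $P_3$ is a node or focus whose stability is read off from the trace; since $u_3(u_3+A)>0$, the sign of $\tr(J(P_3))$ equals the sign of $f(u_3)-S$. Hence $S<f(u_3)$ yields a positive trace and a repeller, proving~(ii), while $S>f(u_3)$ yields a negative trace and an attractor, proving~(iii); the strict inequalities rule out the borderline center case.

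The only genuinely computational point is the reduction of $h(u_3)$ to its factored form, and even this is routine: it is the same manipulation used for $P_2$, the sole difference being that the $+\sqrt{\Delta}$ root now makes $u_3-u_2=+\sqrt{\Delta}>0$ rather than $u_2-u_3=-\sqrt{\Delta}<0$, which flips the relevant sign and produces the factor $(u_3-u_1)$ in place of $(u_1-u_2)$. I expect no obstacle beyond keeping track of this single sign change.
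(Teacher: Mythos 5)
Your proof is correct and follows essentially the same route as the paper: both reduce the classification to the sign of $h(u_3)$ for the determinant and of $f(u_3)-S$ for the trace, via the factorisation $h(u_3)=u_3\sqrt{\Delta}\,(u_3-u_1)$. The only difference is cosmetic --- you obtain that factorisation from the Vieta relations of the reduced quadratic~\eqref{eq17}, giving $h(u_3)=u_3(u_3-u_1)(u_3-u_2)$ with $u_3-u_2=\sqrt{\Delta}$, whereas the paper substitutes the explicit root formula~\eqref{eq18} and expands directly; your derivation is arguably cleaner but not a different argument.
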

\begin{proof}
Evaluating $u^2(2u-(1-A+M))-AM$ at $u=u_3=(1-A+M-u_1+\sqrt{\Delta})/2$ gives: 
\[\begin{aligned}
u^2(2u-(1-A+M))-AM=&\dfrac{\sqrt{\Delta}}{4}(-1+A-M+3u_1-\sqrt{\Delta})\\
&(-1+A-M+u_1-\sqrt{\Delta})=u_3\sqrt{\Delta}(u_3-u_1). 
\end{aligned}\]
Then, the sign of the determinant depends on the parity of $u_3-u_1$. Then, the trace~\eqref{eq19}, and thus the behaviour of $P_3$ depends on the parity of $f(u_3)-S$. 
\end{proof}


\section{The modified May--Holling--Tanner model with alternative food for the predator and Allee effect.}

The May--Holling--Tanner model with Allee effect on the prey and alternative food for predators is given by system~\eqref{eq8}. The system can be simplified by introducing a change of variable and time rescaling presented in~\cite{arancibia}. It is given by the function $\zeta :\breve{\Psi}\times\mathbb{R}\rightarrow \Psi\times\mathbb{R}$, where $\zeta(u,v,\tau)=(N,P,t)=(Ku,nKv,\tau(u+a/K)(u+c/(nK))/rK)$~\cite{arancibia}. By setting the parameters $A$, $C$, $Q$, $S$ and $M$ defined in~\eqref{eq11}, into system~\eqref{eq8} we obtain  
\begin{equation}\label{eq15}
\begin{aligned}
	\dfrac{du}{d\tau} & = u(u+C)((u+A)(1-u)(u-M)-Qv) \,,\\
	\dfrac{dv}{d\tau} & =  Sv(u+A)(u-v+C) \,,
\end{aligned}
\end{equation}
\begin{figure}
\centering
\includegraphics[width=5cm]{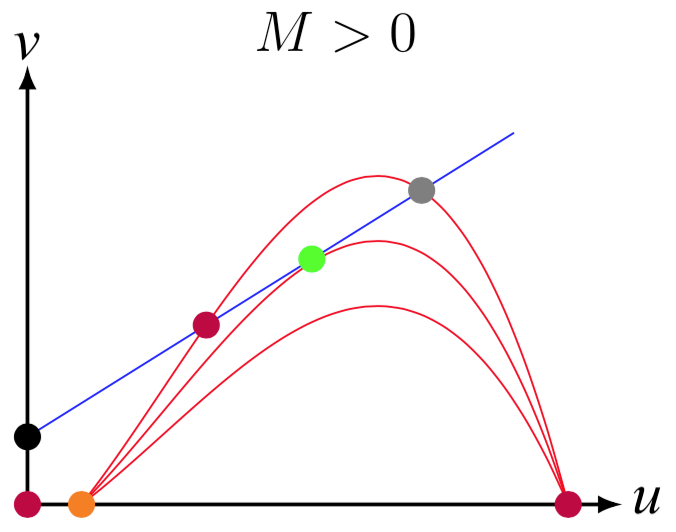}
\includegraphics[width=5cm]{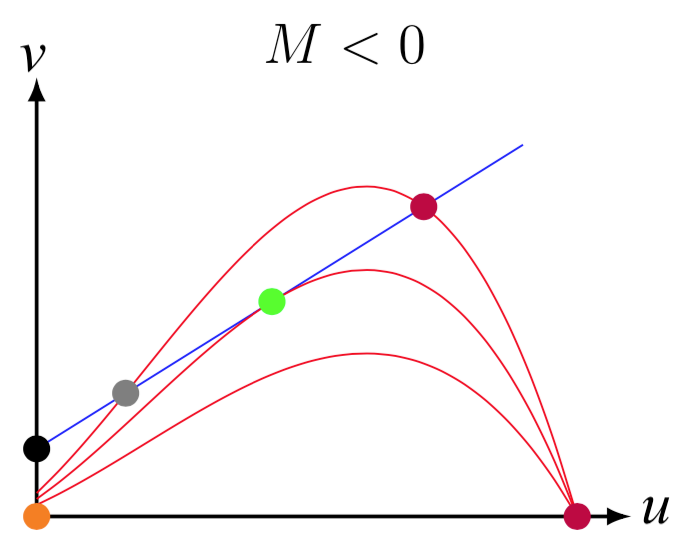}
\caption{The intersection of the predator nullcline (blue line) and the prey nullcline (red curve) in the May--Holling--Tanner model with an alternative food for predators and strong ($M>0$) and weak ($M<0$) Allee effect by changing the parameter $Q$. The purple circle represents a saddle equilibrium point, the grey circle represents an equilibrium point which can be stable or unstable, the black circle represents an equilibrium point which is always stable and the orange circle represents an equilibrium point which is always unstable.}
\label{FIG06}
\end{figure} 

To analyse the positive solution(s) of system~\eqref{eq15} we consider three cases of the intersection of the nullcline of system~\eqref{eq15}. The $u$-nullclines of system~\eqref{eq15} are $u=0$ and $v=(u+A)(1-u)(u-M)/Q$, while the $v$-nullclines are $v=0$ and $v=u+C$. Hence, the equilibrium points for system~\eqref{eq15} are  $(0,0)$, $(1,0)$, $(M,0)$ and the point(s) $(u^*,v^*)$ with $v^*=u^*+C$ and where $u^*$ is determined by the solution(s) of
\begin{equation}\label{eq20}
u^3-(M+1-A)u^2-(A(M+1)-Q-M)u+AM+CQ=0.
\end{equation}
Define the functions $g(u)=(u+A)(1-u)(u-M)$ and $h(u)=Q(u+C)$ and observe that $\lim\limits_{u \rightarrow \pm \infty} g(u)= \mp \infty$ and $g(0)=-AM<0$. So,~\eqref{eq20} will always have a single negative real root, which we denote by $u=-G$ where $G>0$. Factoring out $(u+G)$ from~\eqref{eq20} leaves us with second order polynomial
\begin{equation}\label{eq21}
u^2-(G+M+1-A)u+(M+Q-A(M+1)+G(G-A+M+1))=0,
\end{equation}
and thus we get $G(M+Q-A(M+1)+G(G-A+M+1))-AM-CQ=0$. Note that since $Q>0$, then we obtain that $A<G<C$ or else, $A>G>C$. In addition, we get that $Q=(G+1)(G+M)(G-A)/(C-G)$. 
The solution of~\eqref{eq21} are:
\begin{equation}\label{eq22}
\begin{aligned} 
u_{1,2} &= \dfrac{1}{2}\left(1-A+G+M \pm \sqrt{\Delta} \right)\,, \quad \text{with} \\
\Delta&=(G-A+M+1)^2-4(M+Q-A(M+1)+G(G-A+M+1))\,,
\end{aligned} 
\end{equation}
such that $M<u_1\leq E \leq u_2<1$, where $E=(1-A+M+G)/2$.
In particular, 
\begin{enumerate}
\item if $\Delta<0$~\eqref{eq22}, then~\eqref{eq15} has no equilibrium points in the first quadrant;
\item if $\Delta>0$~\eqref{eq22}, then~\eqref{eq15} has two equilibrium points $P_{1,2}=(u_{1,2},u_{1,2}+C)$ in the first quadrant; and
\item if $\Delta=0$~\eqref{eq22}, then~\eqref{eq15} has one equilibrium point $(E,E+C)$ of order two in the first quadrant,
\end{enumerate} 
Additionally, in Figure \ref{FIG06} we observe that by changing the parameter $Q$ and if $M<0$ (weak Allee effect), then system~\eqref{eq15} also has no equilibrium points, two equilibrium points or one equilibrium point of order two in the first quadrant.

\subsection{Stability of positive equilibrium points}
To determine the nature of the equilibrium points we compute the Jacobian matrix $J(u,v)$ of~\eqref{eq15}
\begin{equation}\label{eq23}
J(u,v)=\begin{pmatrix}
u(u+C)g'(u)+(2u+C)(g(u)-Qv)  & -Qu(u+C) \\ 
Sv(A+C+2u-v)  &  S(C+u-2v)(u+A) 
\end{pmatrix},
\end{equation}
with $g(u)=(u+A)(1-u)(u-M)$ and $g'(u)=(1-u)(u-M)+(u+A)(1-u)-(u+A)(u-M)$.
\begin{lemm}
The equilibrium point $(0,0)$ is a saddle point, $(1,0)$ is a saddle point, $(M,0)$ is an unstable point and $(0,C)$ is a stable point.
\end{lemm}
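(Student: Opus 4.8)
The plan is to linearise system~\eqref{eq15} at each of the four boundary equilibria and read the eigenvalues directly off the Jacobian~\eqref{eq23}. The key structural observation is that all four points sit on a coordinate axis, and this collapses one of the off-diagonal entries: on the $u$-axis we have $v=0$, which annihilates the lower-left entry $Sv(A+C+2u-v)$, whereas at $(0,C)$ we have $u=0$, which annihilates the upper-right entry $-Qu(u+C)$. In every case the linearisation is therefore triangular, so its eigenvalues are exactly the two diagonal entries and the whole argument reduces to tracking their signs under the standing assumptions $A,C,Q,S>0$ and $0<M<1$ (I work in the strong-Allee case $M>0$, which is precisely what makes $(M,0)$ a genuine equilibrium on the positive $u$-axis alongside $(0,0)$ and $(1,0)$).

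First I would record the three elementary facts that simplify every diagonal entry: $g(u)=(u+A)(1-u)(u-M)$ vanishes at $u=M$ and at $u=1$, its value at the origin is $g(0)=-AM$, and $g'(M)=(M+A)(1-M)$, $g'(1)=-(1+A)(1-M)$. With these, evaluating~\eqref{eq23} at the origin gives the diagonal matrix
\begin{equation*}
J(0,0)=\begin{pmatrix} Cg(0) & 0 \\ 0 & SAC \end{pmatrix}=\begin{pmatrix} -ACM & 0 \\ 0 & SAC \end{pmatrix},
\end{equation*}
so the eigenvalues $-ACM<0$ and $SAC>0$ have opposite signs and $(0,0)$ is a saddle. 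The same recipe at $(1,0)$ yields an upper-triangular matrix with diagonal $-(1+A)(1+C)(1-M)<0$ and $S(1+A)(1+C)>0$, again a saddle; at $(M,0)$ it yields an upper-triangular matrix with diagonal $M(M+A)(M+C)(1-M)>0$ and $S(M+A)(M+C)>0$, both positive, so $(M,0)$ is a repeller; and at $(0,C)$ it yields a lower-triangular matrix with diagonal $-C(AM+QC)<0$ and $-SAC<0$, both negative, so $(0,C)$ is an attractor.

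I expect no substantive obstacle here: the work is entirely the correct evaluation of $g$ and $g'$ at $u=0,M,1$ and the bookkeeping of signs, all of which follow from $0<M<1$ and positivity of the remaining parameters. The one point deserving explicit emphasis is that both the saddle conclusion at the origin and the stability at $(0,C)$ genuinely use $M>0$: for $M<0$ the entry $Cg(0)=-ACM$ would be positive, turning the origin into a repeller and altering $(0,C)$, which is consistent with the weak-Allee behaviour illustrated in Figure~\ref{FIG06}. Hence I would state the hypothesis $M>0$ at the outset so that the signs asserted in the lemma are unambiguous.
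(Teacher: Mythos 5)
Your proof is correct and follows essentially the same route as the paper's: evaluate the Jacobian~\eqref{eq23} at each boundary equilibrium, note that it is triangular there, and read the eigenvalue signs off the diagonal; your matrices agree entry-for-entry with those in the paper. The one place you improve on the paper is in flagging that the stated conclusions require $M>0$ (the paper justifies all four signs only by ``since $M<1$'', which is insufficient for $-ACM<0$ at the origin and for $-C(AM+QC)<0$ at $(0,C)$ when $M<0$), so your explicit hypothesis is worth keeping.
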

\begin{proof}
The Jacobian matrix~\eqref{eq23} evaluated at $(0,0)$ and $(1,0)$ gives
	\[ J(0,0)=\begin{pmatrix}
	-ACM  & 0 \\ 
	0  &  ACS 
	\end{pmatrix}~\text{and}\]
	\[ J(1,0)=\begin{pmatrix}
	-(1-M)(C+1)(A+1)  & -Q(C+1) \\ 
	0  &  S(C+1)(A+1) 
	\end{pmatrix}.\]
Similarly, the Jacobian matrix~\eqref{eq23} evaluated at $(M,0)$ gives
	\[J(M,0)=\begin{pmatrix}
	M(1-M)(C+M)(A+M)  & -MQ(C+M) \\ 
	0  & S(A+M)(C+M) 
	\end{pmatrix}.\]
Finally, the Jacobian matrix~\eqref{eq23} evaluated at $(0,C)$ gives
	\[J(0,C)=\begin{pmatrix}
	-C(AM+QC)  & 0 \\ 
	ACS  &  -ACS 
	\end{pmatrix}.\]
Since $M<1$, it follows that $(0,0)$ and $(1,0)$ are a saddle point, $(M,0)$ is an unstable point and $(0,C)$ is a stable point in system~\eqref{eq15}.
\end{proof}

Next, we consider the stability of the positive equilibrium points $P_{1}$ and $P_{2}$ of system~\eqref{eq15}. The Jacobian matrix~\eqref{eq23} of system~\eqref{eq15} at these equilibrium points becomes
\begin{equation}\label{eq24}
J(u,u+C)=\begin{pmatrix}
u(u+C)g'(u)  & -Qu(u+C) \\ 
S(u+C)(u+A)  &  -S(u+C)(u+A) 
\end{pmatrix},
\end{equation}
Here, the determinant and the trace of the Jacobian matrix~\eqref{eq24} are given by
\begin{equation}\label{eq25}
\begin{aligned}
\det(J(u,u+C))&=Su(u+A)u+C)^2(Q-g'(u))\\
\tr(J(u,u+C))&= (u+C)(u+A)\left(\dfrac{ug'(u)}{u+A}-S\right)
\end{aligned}
\end{equation}
Therefore, the sign of the determinant the trace~\eqref{eq25}, and thus the stability of the positive equilibrium points, depends on the sign of $Q-g'(u_{1,2})$ and $ug'(u)/(u+A)-S$ respectively. This gives the following results.
\begin{lemm}
Let the system parameters of~\eqref{eq15} be such that $\Delta>0$~\eqref{eq25}. Then, the equilibrium point $P_1$ is a saddle point.
\end{lemm}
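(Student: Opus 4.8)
The plan is to show that the determinant of the Jacobian~\eqref{eq24} at $P_1$ is negative, since for a planar system an equilibrium with $\det(J)<0$ is automatically a saddle point: the eigenvalues satisfy $\lambda_1\lambda_2=\det(J)<0$, which rules out a complex-conjugate pair (whose product is $|\lambda|^2>0$) and forces two real eigenvalues of opposite sign. From the determinant formula in~\eqref{eq25},
\[
\det\bigl(J(P_1)\bigr)=S\,u_1(u_1+A)(u_1+C)^2\bigl(Q-g'(u_1)\bigr),
\]
the prefactor $S\,u_1(u_1+A)(u_1+C)^2$ is strictly positive, because $S,A,C>0$ and $u_1>0$ (as $P_1$ lies in the first quadrant). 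Hence the sign of $\det(J(P_1))$ is governed entirely by the sign of $Q-g'(u_1)$, and the whole task reduces to determining that one sign.

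To evaluate $Q-g'(u_1)$ I would exploit the fact that $u_1$ is a root of the cubic~\eqref{eq20}. Introduce $\psi(u):=g(u)-Q(u+C)$, the difference between the prey nullcline height and the (scaled) predator nullcline height; its zeros are exactly the $u$-coordinates of the equilibria, namely $-G$, $u_1$ and $u_2$. Since $\psi$ is $-1$ times the monic cubic~\eqref{eq20}, it factors as $\psi(u)=-(u+G)(u-u_1)(u-u_2)$. Differentiating and evaluating at the simple root $u_1$ (here $\Delta>0$ guarantees $u_1\neq u_2$, and $-G<0<u_1$ guarantees $u_1\neq -G$, so $u_1$ is simple) gives $\psi'(u_1)=-(u_1+G)(u_1-u_2)$. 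Because $\psi'(u)=g'(u)-Q$, this produces the clean identity $Q-g'(u_1)=(u_1+G)(u_1-u_2)$.

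It then remains only to read off the sign. The hypothesis $\Delta>0$ forces $u_1<u_2$ (immediate from~\eqref{eq22}, where $u_1$ carries the minus sign), so $u_1-u_2<0$; and $u_1+G>0$ since $G>0$ and $u_1>0$. Therefore $Q-g'(u_1)=(u_1+G)(u_1-u_2)<0$, whence $\det(J(P_1))<0$ and $P_1$ is a saddle point. This proceeds in the same spirit as the computations in Lemmas~\ref{CH8:p2} and~\ref{CH8:p3}, where the determinant-governing factor was likewise re-expressed through differences of the roots.

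The only genuinely delicate point — the step I would treat most carefully — is the bookkeeping of signs: one must fix the correct sign convention in passing from the monic cubic~\eqref{eq20} to $\psi$, and keep track of the fact that it is $u_1$ (the smaller root) that feeds into $P_1$, so that the factor $u_1-u_2$ is negative rather than positive. Everything else is a short algebraic reduction once the factored form of $\psi$ is in hand, and no delicate estimate of $g'$ itself is needed.
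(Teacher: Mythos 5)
Your proof is correct and follows essentially the same route as the paper: both reduce the claim to the sign of $Q-g'(u_1)$ in the determinant formula~\eqref{eq25} and show it is negative. Your identity $Q-g'(u_1)=(u_1+G)(u_1-u_2)$, obtained from the factored cubic, is precisely the paper's expression $-\tfrac{\sqrt{\Delta}}{2}\left(3G+M+1-A-\sqrt{\Delta}\right)$ rewritten (since $u_1-u_2=-\sqrt{\Delta}$ and $2(u_1+G)=3G+M+1-A-\sqrt{\Delta}$), with the added benefit that the positivity of the factor $u_1+G$ --- which the paper uses implicitly without comment when concluding $\det(J(P_1))<0$ --- is made explicit.
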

\begin{proof}
Evaluating $Q-g'(u)$ at $u_1$ gives:
$$Q-g'(u_1)=-\dfrac{\sqrt{\Delta}}{2}\left(3G+M+1-A-\sqrt{\Delta}\right)$$
Hence, $\det(J(P_1))<0$ and $P_1$ is thus a saddle point.
\end{proof}
\begin{lemm}
Let the system parameters be such that $\Delta>0$~\eqref{eq25}. Then, the equilibrium point $P_2$ is:
\begin{enumerate}
\item unstable if $S<\dfrac{D}{4\left(1+A+M+G+\sqrt{\Delta}\right)}$; and
\item stable if $S>\dfrac{D}{4\left(1+A+M+G+\sqrt{\Delta}\right)}$,
\end{enumerate}
with $D=\left(1-A+M+G+\sqrt{\Delta}\right)\left(1+A-M-G-\sqrt{\Delta}\right)\left(1-A-M+G+\sqrt{\Delta}\right)+2\left(1+A+M+G+\sqrt{\Delta}\right)\left(A-G-\sqrt{\Delta}\right)$. 
\end{lemm}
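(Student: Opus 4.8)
The plan is to exploit the sign structure already recorded in \eqref{eq25}. Since $\det\bigl(J(u,u+C)\bigr)$ and $\tr\bigl(J(u,u+C)\bigr)$ factor through $Q-g'(u)$ and $ug'(u)/(u+A)-S$ respectively, and since the prefactors multiplying these scalars in \eqref{eq25} are strictly positive at the positive equilibrium $P_2=(u_2,u_2+C)$ lying in the first quadrant (so $0<u_2<1$), the whole classification reduces to the signs of these two scalar quantities. First I would settle the determinant, which decides whether $P_2$ is a saddle or a node/focus; then the trace, which splits the node/focus case into attractor and repeller.

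For the determinant the key observation is that the cubic whose roots are the equilibrium abscissae, namely $c(u):=Q(u+C)-g(u)$ from \eqref{eq20}, satisfies $c'(u)=Q-g'(u)$ identically. Writing $c(u)=(u+G)(u^2-\beta u+\gamma)$ as in the factorisation leading to \eqref{eq21}, with $\beta=1-A+M+G$ and discriminant $\beta^2-4\gamma=\Delta$, I get $c'(u)=(u^2-\beta u+\gamma)+(u+G)(2u-\beta)$. Evaluating at the root $u_2$ annihilates the first term and leaves $Q-g'(u_2)=(u_2+G)(2u_2-\beta)=(u_2+G)\sqrt{\Delta}$, since $2u_2-\beta=\sqrt{\Delta}$ by \eqref{eq22}. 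This is the mirror image of the preceding $P_1$ computation, where $2u_1-\beta=-\sqrt{\Delta}$ produces the negative value that makes $P_1$ a saddle. As $u_2>0$, $G>0$ and $\Delta>0$, we obtain $Q-g'(u_2)>0$, hence $\det(J(P_2))>0$ and $P_2$ is an attractor or a repeller, never a saddle.

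It then remains to read off the trace. By \eqref{eq25} the sign of $\tr(J(P_2))$ equals that of $u_2g'(u_2)/(u_2+A)-S$, so the critical value is $S^\ast=u_2g'(u_2)/(u_2+A)$, with $P_2$ a repeller for $S<S^\ast$ and an attractor for $S>S^\ast$. The remaining task is purely algebraic: substitute $u_2=\tfrac12(1-A+M+G+\sqrt{\Delta})$, the resulting $u_2+A=\tfrac12(1+A+M+G+\sqrt{\Delta})$, and the product form $g'(u_2)=(1-u_2)(u_2-M)+(u_2+A)(1+M-2u_2)$, rewriting each factor in the symmetric variables $1\pm A\pm M\pm G\pm\sqrt{\Delta}$ and clearing denominators to bring $S^\ast$ into the quotient form $D/\bigl(4(1+A+M+G+\sqrt{\Delta})\bigr)$. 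I expect this last reduction to be the only real obstacle: it is a long but routine manipulation, and the one place where care is needed is tracking the common factor $1-A+M+G+\sqrt{\Delta}=2u_2$ that multiplies through both terms of $u_2g'(u_2)$, since it governs the precise shape of the numerator $D$. Once $S^\ast$ is in the stated form, the dichotomy in the statement follows at once from the sign analysis of the trace.
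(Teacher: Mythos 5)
Your proposal is correct and follows the paper's proof essentially step for step: first $\det(J(P_2))>0$ from the sign of $Q-g'(u_2)$, then the stability threshold for $S$ from the trace factor $u_2g'(u_2)/(u_2+A)-S$. Your route to the determinant sign --- differentiating the factored cubic to get $Q-g'(u_2)=(u_2+G)(2u_2-\beta)=(u_2+G)\sqrt{\Delta}>0$ --- is a cleaner version of the paper's direct evaluation (and yields the correct positive sign, whereas the paper's displayed expression carries a spurious leading minus), and your caution about carrying the factor $1-A+M+G+\sqrt{\Delta}=2u_2$ through the second term of $u_2g'(u_2)$ is well placed, since that is exactly the step at which the final reduction to the stated $D$ must be checked.
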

\begin{proof}
Evaluating $Q-g'(u)$ at $u_2$ gives:
$$Q-g'(u_1)=-\dfrac{\sqrt{\Delta}}{2}\left(3G+M+1-A+\sqrt{\Delta}\right)$$
Hence, $\det(J(P_2))>0$. Evaluating $ug'(u)/(u+A)-S$ at $u_2$ gives
\[\begin{aligned}
\dfrac{u_2g'(u_2)}{u_2+A}-S=&\dfrac{(1-A+M+G+\sqrt{\Delta})(1+A-M-G-\sqrt{\Delta})(1-A-M+G+\sqrt{\Delta})}{4(1+A+M+G+\sqrt{\Delta})}\\
&+\dfrac{(A-G-\sqrt{\Delta})}{2}-S\\
\end{aligned}\]
Therefore, the sign of the trace, and thus the behaviour of $P_2$ depends on the sign of $u_2g'(u_2)/(u_2+A)-S$. 
\end{proof}  

Next we analyse the case when $\Delta=0$~\eqref{eq25} then the equilibrium points $P_1$ and $P_2$ collapse such that  $u_1=u_2=E=(1-A+M+G)/2$. Therefore, system~\eqref{eq15} has one equilibrium point of order two in the first quadrant given by $(E,E+C)$.
\begin{lemm}
Let the system parameters be such that $\Delta=0$~\eqref{eq22}. Then, the equilibrium point $(E,E+C)$ is:
\begin{enumerate}
\item a stable saddle-node if $S<\dfrac{H}{4(1+A+M+G)}$; and
\item an unstable saddle-node if $S>\dfrac{H}{4(1+A+M+G)}$,
\end{enumerate}
with $H=(1-A+M+G)(1+A-M-G)(1-A+G-M)+2(1+M+G+A)(A-G)$.
\end{lemm}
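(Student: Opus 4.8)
The plan is to exploit the fact that at $\Delta=0$ the two positive equilibria $P_1,P_2$ of the two preceding lemmas coalesce, so that $(E,E+C)$ is non-hyperbolic with a single zero eigenvalue, and then to classify it as a saddle-node by a Sotomayor (equivalently centre-manifold) argument. First I would record the algebraic content of $\Delta=0$. Writing $F(u)=g(u)-Q(u+C)$, the positive equilibria are exactly the roots of $F$, and $\Delta=0$~\eqref{eq22} forces a \emph{double} root at $u=E=(1-A+M+G)/2$; equivalently $F(E)=F'(E)=0$, i.e.\ $g'(E)=Q$. Substituting $g'(E)=Q$ into the determinant in \eqref{eq25} gives $\det(J(E,E+C))=S\,E(E+A)(E+C)^2(Q-g'(E))=0$, so one eigenvalue of $J(E,E+C)$ vanishes. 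Consistently, the two quantities $Q-g'(u_{1,2})$ evaluated in the $P_1$ and $P_2$ lemmas both tend to $0$ as $\sqrt{\Delta}\to0$.

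Since $\det=0$, the spectrum of $J(E,E+C)$ is $\{0,\tr J\}$, so the nonzero eigenvalue is exactly the trace and its sign governs the stability of the hyperbolic direction. From \eqref{eq25}, $\tr(J(E,E+C))=(E+C)(E+A)\big(Eg'(E)/(E+A)-S\big)$, and as $E,A,C>0$ the prefactor is positive; hence the sign of the trace is that of $Eg'(E)/(E+A)-S$. Using $1-E=(1+A-M-G)/2$, $E-M=(1-A-M+G)/2$ and $E+A=(1+A+M+G)/2$ in $g'(E)=(1-E)(E-M)+(E+A)(1-E)-(E+A)(E-M)$, the quantity $Eg'(E)/(E+A)$ simplifies — this is the $\sqrt{\Delta}\to0$ limit of the $P_2$ computation — to $H/\big(4(1+A+M+G)\big)$ with $H$ as stated. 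Thus $\tr J$ changes sign exactly at $S=H/\big(4(1+A+M+G)\big)$.

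The main obstacle is certifying that $(E,E+C)$ is a genuine codimension-one saddle-node rather than a more degenerate point, and pinning down its attracting/repelling character. For this I would apply Sotomayor's theorem with $Q$ (equivalently $\Delta$) as the bifurcation parameter. The zero eigenvalue is simple, so it remains to verify the two non-degeneracy conditions: transversality of the left null eigenvector $w$ of $J(E,E+C)$ against $\partial_Q f(E,E+C)=(-E(E+C)^2,0)\ne0$, and non-vanishing of the quadratic form $w^{\top}D^2f(E,E+C)(v,v)$, where $v$ is the right null eigenvector. The second condition is where the cubic prey term is essential: at $\Delta=0$ the cubic factors as $F(u)=-(u+G)(u-E)^2$, so the remaining root $-G$ is simple and $F''(E)=-2(E+G)\ne0$. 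This is precisely the statement that the double root is a simple fold, and it forces the centre-manifold quadratic coefficient to be nonzero; reducing to the one-dimensional centre manifold yields $\dot w=aw^2+O(w^3)$ with $a\ne0$.

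Granting non-degeneracy, $(E,E+C)$ is a saddle-node whose attracting/repelling type is read off from the sign of the transverse eigenvalue $\tr J$, giving the two regimes separated by $S=H/\big(4(1+A+M+G)\big)$ recorded in the statement. As a consistency check, by continuity as $\Delta\downarrow0$ this is exactly the collision of the saddle $P_1$ and the node $P_2$ of the two preceding lemmas into a single saddle-node, with the stability of the node part carried over. I expect the only genuine work to be the centre-manifold/Sotomayor computation in the third paragraph; the determinant and trace evaluations are routine given the $P_1$ and $P_2$ lemmas.
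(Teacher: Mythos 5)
Your computational core coincides with the paper's own proof: both establish $\det J(E,E+C)=0$ from the identity $Q-g'(E)=0$ at the double root, and both reduce the classification to the sign of $Eg'(E)/(E+A)-S$, i.e.\ the sign of the nonzero eigenvalue $\operatorname{tr}J$. Where you genuinely go beyond the paper is your third paragraph: the paper stops after the trace computation and simply asserts the saddle-node classification, whereas you verify the Sotomayor/centre-manifold non-degeneracy conditions, in particular that at $\Delta=0$ the cubic $F(u)=g(u)-Q(u+C)$ factors as $-(u+G)(u-E)^{2}$, so $F''(E)=-2(E+G)\neq0$ and the quadratic coefficient on the one-dimensional centre manifold does not vanish. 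That step is what actually licenses the word ``saddle-node'' (as opposed to a more degenerate equilibrium), and it is entirely absent from the paper; your version is the more complete argument. One caution on the algebra you took on faith: you assert that $Eg'(E)/(E+A)$ simplifies to $H/\bigl(4(1+A+M+G)\bigr)$ with $H$ as stated, but carrying out the substitution gives $E\bigl((1-E)-(E-M)\bigr)=E(1+M-2E)=E(A-G)=(1-A+M+G)(A-G)/2$ for the second contribution, not $(A-G)/2$; hence the stated $H$ appears to be missing a factor $1-A+M+G$ in its second term (the paper's proof, and the analogous constant $D$ in the $P_2$ lemma, carry the same slip). This does not change the structure of either argument, only the numerical threshold in $S$ separating the stable and unstable saddle-node regimes.
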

\begin{proof}
Evaluating $Q-g'(u)$ at $u=E$ gives    
$$Q-g'(E)=0.$$
Therefore, $\det(J(E,E))=0$. Next, evaluating $ug'(u)/(u+A)-S$ at $E$ gives
\[\begin{aligned}
Eg'(E)/(E+A)-S =&\dfrac{(1-A+M+G)(1+A-M-G)(1-A-M+G)}{4(1+A+M+G)}\\
&+\dfrac{(A-G)}{2}-S.
\end{aligned}\]
Therefore, the behaviour of the equilibrium point $(E,E+C)$ depends on the parity of $Eg'(E)/(E+A)-S$. 
\end{proof}


\end{document}